\documentclass[english,a4paper,12pt]{amsart}
\usepackage[a4paper,lmargin=2cm,rmargin=2cm,tmargin=4cm,bmargin=4cm]{geometry}
\usepackage{amsmath,amssymb,amsfonts,amsthm}
\usepackage{amsrefs}
\usepackage{enumerate}
\usepackage{graphicx}
\usepackage{float}
\usepackage[colorlinks=true,linkcolor=blue]{hyperref}
\usepackage{pict2e}

\usepackage{bbm}
\usepackage[all]{xy}



\newtheorem{theorem}{Theorem}[section]
\theoremstyle{definition}
\newtheorem{rem}[theorem]{Remark}
\newtheorem{exem}[theorem]{Examples}
\newtheorem{defi}[theorem]{Definition}
\newtheorem{qu}[theorem]{Question}
\newtheorem{th1}[theorem]{Theorem}
\theoremstyle{plain}
\newtheorem{lem}[theorem]{Lemma}
\newtheorem{thm}[theorem]{Theorem}
\newtheorem{prop}[theorem]{Proposition}
\newtheorem{cor}[theorem]{Corollary}

\newcommand{\R}{\mathbb{R}}
\newcommand{\N}{\mathbb{N}}
\newcommand{\Z}{\mathbb{Z}}

\newcommand{\E}{\mathbb{E}}

\newcommand{\ra}{\sqrt}
\newcommand{\ep}{\varepsilon}
\newcommand{\disp}{\displaystyle}
\newcommand{\F}{\mathcal{F}}
\renewcommand{\l}{\ell}
\newcommand{\la}{\langle}
\renewcommand{\ra}{\rangle}

\newcommand{\vertiii}[1]{{\left\vert\kern-0.25ex\left\vert\kern-0.25ex\left\vert #1 
    \right\vert\kern-0.25ex\right\vert\kern-0.25ex\right\vert}}



\begin{document}

\title{Lipschitz-free spaces and Schur properties}

\author{Colin Petitjean}\address{Univ. Bourgogne Franche-Comt\'e\\
Laboratoire de Math\'ematiques UMR 6623\\
16 route de Gray\\
25030 Besan\c con Cedex\\
France}\email{colin.petitjean@univ-fcomte.fr}

\keywords{Lipschitz-free spaces; Schur property; Compact metric spaces; Proper metric spaces; Quasi-Banach spaces; Approximation property.}

\subjclass[2010]{Primary 46B10, 46B28; Secondary 46B20}

\begin{abstract}
In this paper we study $\ell_1$-like properties for some Lipschitz-free spaces. The main result states that, under some natural conditions, the Lipschitz-free space over a proper metric space linearly embeds into an $\l_1$-sum of finite dimensional subspaces of itself. We also give a sufficient condition for a Lipschitz-free space to have the Schur property, the $1$-Schur property and the $1$-strong Schur property respectively. We finish by studying those properties on a new family of examples, namely the Lipschitz-free spaces over metric spaces originating from $p$-Banach spaces, for $p$ in $(0,1)$.
\end{abstract}

\maketitle

\section{Introduction}

Let $M$ be a pointed metric space and fix $0 \in M$ a distinguished origin of $M$. Let us denote $Lip_0(M)$ the space of all real valued Lipschitz functions $f$ defined on $M$ and verifying $f(0)=0$. This space equipped with the Lipschitz norm $\|\cdot\|_L$ (best Lipschitz constant) is a Banach space. For $x\in M$, define the evaluation functional $\delta_M(x) \in Lip_0(M)^*$ by $\langle\delta_M(x),f\rangle = f(x)$ for every $f \in  Lip_0(M)$. And now we let the Lipschitz-free space over M, denoted $\F(M)$, be the norm closed linear span in $Lip_0(M)^*$ of $\{\delta_M(x)  :  x\in M \}$. The map $\delta_M\colon x \in M \mapsto \delta_M(x)\in \F(M)$ is readily seen to be an isometry. Moreover the space $\F(M)$ enjoys a nice factorization property. For any Banach space $X$,  for any Lipschitz function $f\colon M \to X$, there exists a unique linear operator $\overline{f}\colon \F(M) \to X$ with $\|\overline{f}\|=\|f\|_{L}$ and such that the following diagram commutes
$$\xymatrix{
    M \ar[r]^f \ar@{^{(}->}[d]_{\delta_{M}} & X  \\
    \F(M) \ar[ru]_{\overline{f}} 
  }$$
Thus the map $f\in Lip_0(M,X) \mapsto \overline{f} \in \mathcal{L}(\F(M),X)$ is an onto linear isometry. As a direct consequence, $\F(M)$ is an isometric predual of $Lip_0(M)$. 

After the seminal paper \cite{godkal} of Godefroy and Kalton, Lispchitz free spaces have become an object of interest for many authors (see for instance \cites{cuthdoucha,dalprokauf,godozawa,hajekpernecka,hajeklancienpernecka,godard, kalton2004}). Indeed, the fundamental factorization property cited above transforms in a particular way a nonlinear problem into a linear one. This creates links between some old open problems in the geometry of Banach spaces and some open problems about Lipschitz-free spaces (see the open problems section in \cite{godsurvey}).

In this note we focus mostly on $\ell_1$-like properties like the Schur property or some stronger properties. In \cite{kalton2004}, Kalton proved that if $(M,d)$ is a metric space and $\omega$ is a nontrivial gauge then the space $\F(M,\omega \circ d)$ has the Schur property. In \cite{hajeklancienpernecka} H\'ajek, Lancien and Perneck\'a proved that the Lipschitz-free space over a proper countable metric space has the Schur property. Here we give a condition on $M$ which ensures that $\F(M)$ has the Schur property, and unifies the two above mentioned results. Furthermore, assuming that $M$ is proper (every closed ball is compact) and $\F(M)$ has the metric approximation property, we are able to provide more information about the``$\ell_1$-structure" of $\F(M)$. More precisely, we have the following theorem which is our first main result. 
\begin{thm} 
Let $M$ be a proper metric space such that $S_0(M)$ separates points uniformly (see Definition \ref{deflip}) and such that $\F(M)$ has (MAP). Then for any $\ep > 0$, there exists a sequence $(E_n)_n$ of finite-dimensional subspaces of $\F(M)$ such that $\F(M)$ is $(1+\ep)$-isomorphic to a subspace of $\left( \sum\disp{\oplus_n E_n } \right)_{\ell_1}$.
\end{thm}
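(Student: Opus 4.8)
The plan is to reduce everything to constructing a sequence of finite-rank operators $(R_n)_n$ on $\F(M)$ enjoying two properties: they reconstruct the identity, $\sum_n R_n = I$ in the strong operator topology, and they split the norm almost isometrically in an $\ell_1$ fashion, $\frac{1}{1+\ep}\|\mu\| \le \sum_n \|R_n\mu\| \le (1+\ep)\|\mu\|$ for every $\mu\in\F(M)$. Granting these, the map $\mu\mapsto (R_n\mu)_n$ is the desired $(1+\ep)$-embedding into $\left(\sum\oplus_n E_n\right)_{\ell_1}$ with $E_n := R_n(\F(M))$ finite-dimensional. Note that the lower estimate is then free: since $\mu=\sum_n R_n\mu$, the triangle inequality already gives $\|\mu\|\le\sum_n\|R_n\mu\|$. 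Thus the whole difficulty lies in producing operators that are \emph{simultaneously} finite-rank and $\ell_1$-dominating from above.

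First I would use properness to organise the large-scale geometry. Write $M=\bigcup_k \overline{B}(0,\rho_k)$ with $\rho_k\uparrow\infty$ chosen with a sufficiently large geometric ratio, and take a radial partition of unity $(\phi_n)_n$ (functions of $d(\cdot,0)$) subordinate to the resulting annuli, with uniformly bounded overlap and $\sum_n\phi_n\equiv 1$. Because each $\mathrm{supp}\,\phi_n$ is bounded, hence compact, the multiplication operators $f\mapsto\phi_n f$ are weak$^*$-continuous on $Lip_0(M)$, so their preadjoints $W_{\phi_n}$ are well-defined bounded operators on $\F(M)$ satisfying $W_{\phi_n}\delta_x=\phi_n(x)\delta_x$ and $\sum_n W_{\phi_n}=I$. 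A direct transportation computation on a molecule $\delta_x-\delta_y$ yields the exact identity
\[
\sum_n \| W_{\phi_n}(\delta_x-\delta_y) \| = d(x,y) + S_{xy}\,\big( d(x,0)+d(y,0)-d(x,y) \big), \quad S_{xy}=\tfrac12\sum_n |\phi_n(x)-\phi_n(y)|,
\]
and the geometric spacing keeps the excess term below $\ep\,d(x,y)$ (one balances the factor $d(x,0)+d(y,0)-d(x,y)$ against $S_{xy}$, which a large ratio forces to be small). Passing from molecules to arbitrary $\mu$ via an optimal molecular decomposition upgrades this to $\sum_n\|W_{\phi_n}\mu\|\le(1+\ep)\|\mu\|$, so that $\mu\mapsto(W_{\phi_n}\mu)_n$ already $\ell_1$-embeds $\F(M)$ into the annular pieces.

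The genuinely hard part is the local, small-scale behaviour inside each compact annulus, and this is exactly where the hypothesis that $S_0(M)$ separates points uniformly becomes indispensable. Each $W_{\phi_n}$ lands in $\F(\mathrm{supp}\,\phi_n)$ but is \emph{not} compact (it acts as the identity on the inner sub-annulus), so one cannot finite-dimensionalise it in a single block. I would therefore split further by metric scale in the spirit of Kalton: uniform separation by functions of $S_0(M)$ controls the high-frequency (small-distance) content and renders the scale-layers $\ell_1$-summable — precisely the mechanism that fails for $\F([0,1])=L_1$, where $S_0$ does not separate points uniformly and the conclusion is false. Then (MAP) is invoked to replace each infinite-dimensional scale block by a finite-rank approximant, the errors chosen below $\ep\,2^{-n}$ so they sum to less than $\ep$ and disturb neither the reconstruction $\sum R_n=I$ nor the $\ell_1$ estimate. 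Reindexing the resulting $\ell_1$-sum of $\ell_1$-sums (outer radial, inner scale) as a single $\ell_1$-sum of finite-dimensional spaces delivers the operators $R_n$ and the subspaces $E_n$.

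The main obstacle I anticipate is the $\ell_1$ upper bound, and specifically making uniform separation yield summable control of all small scales at once while the (MAP) truncation is performed with summable error; the radial estimate above is only the coarse, large-scale half of the story. By contrast the lower bound, the boundedness and reconstruction properties of the $W_{\phi_n}$, and the finite-dimensionality of the blocks are comparatively routine. The most delicate step is the bookkeeping that fuses the three mechanisms — spatial localisation from properness, scale localisation from uniform separation, and finite-rank truncation from (MAP) — into one $\ell_1$-sum while keeping every error term under $\ep$.
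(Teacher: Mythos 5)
Your proposal has a genuine gap, and it sits exactly where you yourself locate ``the genuinely hard part.'' The large-scale radial decomposition is fine: it is essentially Kalton's Proposition 4.3 (which the paper invokes in the proof of Proposition \ref{proposchur}, not of this theorem), although your ``exact identity'' for molecules is really only an upper estimate. The problem is that the two mechanisms that must carry the whole theorem are never actually constructed. First, the claim that uniform separation by $S_0(M)$ ``renders the scale-layers $\ell_1$-summable'' is not instantiated: you never say what a scale block is on the free-space side, what operators realize it, or why the uniform separation constant produces a summable estimate; for $\F([0,1])=L_1$ the conclusion indeed fails, but pointing at the failure case is not a proof of the positive case. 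Second, and more seriously, the way you propose to use (MAP) cannot work as stated: (MAP) provides finite-rank operators $T$ with $\|T\|\le 1$ and $\|T\mu-\mu\|\le\ep$ only for $\mu$ ranging in a prescribed \emph{compact} set. There is no finite-rank operator within operator-norm distance $\ep\,2^{-n}$ of the identity of an infinite-dimensional block (otherwise that block would be finite-dimensional), so ``replace each infinite-dimensional scale block by a finite-rank approximant with error below $\ep\,2^{-n}$'' is not a step one can take --- it is precisely the obstruction. Passing from compact-set (or pointwise) approximation to finite-rank operators $R_n$ that simultaneously satisfy $\sum_n R_n=I$ and the $\ell_1$ upper bound requires a blocking/skipping argument, and that argument is exactly the content of the Godefroy--Kalton--Li lemma, which in turn exploits a $c_0$-structure that your outline never produces.

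For contrast, the paper's proof constructs nothing by hand; it concatenates three known results. By Dalet's theorem (Theorem \ref{propdalet}), properness plus uniform separation gives $S_0(M)^*\equiv\F(M)$, and by Dalet's lemma (Lemma \ref{lemdalet}), $S_0(M)$ is $(1+\ep')$-isomorphic to a subspace $Z$ of $c_0(\N)$. Grothendieck's results (Theorem \ref{grothendieck}) transfer (MAP) from $\F(M)\simeq Z^*$ to $Z$: $Z^*$ has the $(1+\ep')$-(BAP), hence (AP), hence (MAP) since it is a separable dual, hence $Z$ has (MAP). Finally the Godefroy--Kalton--Li lemma (Lemma \ref{lemgodkalli}) applied to $Z\subset c_0(\N)$ with (MAP) yields the $(1+\ep')$-embedding of $Z^*\simeq\F(M)$ into an $\ell_1$-sum of finite-dimensional subspaces. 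Note that the duality $S_0(M)^*\equiv\F(M)$, which your outline never uses, is essential there: it is how the conclusion is transported back to $\F(M)$. The machinery you are missing lives inside Dalet's lemma (the $c_0$-embedding encodes the small-scale decomposition) and inside the GKL lemma (the skipped-blocking with (MAP) produces your operators $R_n$). Completing your direct approach would essentially amount to reproving both in a metric setting; possible in principle, but it is the entire difficulty of the theorem, not bookkeeping.
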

We now describe the content of this paper. In Section \ref{section2}, we start by introducing some useful tools such as the little Lipschitz space over a metric space $M$ (denoted $lip_0(M)$) and the uniform separation of points. Next, generalizing the proof of Theorem 4.6 in \cite{kalton2004}, we show that $\F(M)$ has the Schur property whenever $lip_0(M)$ is $1$-norming. Next we move to the proof of Theorem \ref{propo} and we also show that it is optimal in some sense. Then we show that some quantitative versions of the Schur property are inherited by Lipschitz-free spaces satisfying some of the assumptions of Theorem \ref{propo}. 

In Section \ref{section3}, we introduce a new family of metric spaces where some results of Section \ref{section2} apply. This family consists of some metric spaces originating from $p$-Banach spaces ($p \in (0,1)$). We first focus on the metric space $M_p^n$ originating from $\ell_p^n$ for which we show that $\F(M_p^n)$ satisfies the hypothesis of Theorem \ref{propo}. Then we extend this result for arbitrary finite dimensional $p$-Banach spaces (Corollary \ref{cor37}). As a consequence we obtain our second main result.
\begin{thm}
Let $p$ be in $(0,1)$ and let $(X,\|\cdot\|)$ be a $p$-Banach space which admits a finite dimensional decomposition. Then $\F(X,\|\cdot\|^p)$ has the Schur property.
\end{thm}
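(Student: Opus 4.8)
The plan is to reduce the statement to the finite-dimensional case already settled in Corollary \ref{cor37} and then to glue the pieces together along the finite-dimensional decomposition, exploiting the stability of the Schur property under subspaces and $\ell_1$-sums. Write the FDD as $X=\overline{\bigoplus_n F_n}$, set $X_n=F_1\oplus\cdots\oplus F_n$, and let $P_n\colon X\to X_n$ be the canonical projections, with FDD constant $K=\sup_n\|P_n\|$. Since $\|\cdot\|$ is a $p$-norm, $\|P_nx-P_ny\|^p\le K^p\|x-y\|^p$ for all $x,y\in X$, so each $P_n$ is $K^p$-Lipschitz from $(X,\|\cdot\|^p)$ to $(X_n,\|\cdot\|^p)$ and induces a bounded linear operator $\widetilde{P_n}\colon\F(X,\|\cdot\|^p)\to\F(X,\|\cdot\|^p)$ with range in the isometric copy of $\F(X_n,\|\cdot\|^p)$. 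Because $P_nx\to x$ in $X$ we have $\|\delta_X(P_nx)-\delta_X(x)\|=\|P_nx-x\|^p\to0$, whence $\widetilde{P_n}\to\mathrm{Id}$ in the strong operator topology by uniform boundedness. Each $X_n$ is a finite-dimensional $p$-Banach space, hence a proper space to which Corollary \ref{cor37} applies; combined with the results of Section \ref{section2} this shows that $\F(X_n,\|\cdot\|^p)$ has the Schur property.

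To prove the Schur property of $\F(X,\|\cdot\|^p)$, let $(\gamma_k)_k$ be a weakly null sequence and show $\|\gamma_k\|\to0$. For each fixed $n$ the operator $\widetilde{P_n}$ is weak-to-weak continuous, so $(\widetilde{P_n}\gamma_k)_k$ is weakly null in the subspace $\F(X_n,\|\cdot\|^p)$, and the Schur property of the latter gives $\|\widetilde{P_n}\gamma_k\|\to0$ as $k\to\infty$. From $\|\gamma_k\|\le\|\widetilde{P_n}\gamma_k\|+\|(\mathrm{Id}-\widetilde{P_n})\gamma_k\|$ we deduce $\limsup_k\|\gamma_k\|\le\limsup_k\|(\mathrm{Id}-\widetilde{P_n})\gamma_k\|$ for every $n$, so it suffices to prove that the tails $(\mathrm{Id}-\widetilde{P_n})\gamma_k$ are uniformly small for large $n$. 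An equivalent and more transparent formulation is to construct directly an isomorphic embedding of $\F(X,\|\cdot\|^p)$ into $\big(\sum\oplus_n\F(X_n,\|\cdot\|^p)\big)_{\ell_1}$ from the block maps $P_n-P_{n-1}$; once such an embedding is available the conclusion is immediate, since the Schur property passes to subspaces and to $\ell_1$-sums of Schur spaces.

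The main obstacle is exactly this uniform tail estimate, equivalently the $\ell_1$ lower bound in the embedding, since $\widetilde{P_n}\to\mathrm{Id}$ only strongly and not uniformly on a weakly null sequence, which need not be relatively norm-compact. This is where the hypothesis $p\in(0,1)$ is essential: the concavity of $t\mapsto t^p$ makes the snowflaked metric $\|\cdot\|^p$ behave additively across well-separated blocks, so that the free space over each finite union of blocks carries a genuine $\ell_1$-structure, which is precisely what applying Theorem \ref{propo} to the proper spaces $X_n$ provides. I would settle the tail estimate by a gliding-hump argument: were the tails not uniformly small, a standard FDD perturbation would yield, after passing to a subsequence, an almost-block sequence which the $\ell_1$-structure forces to be equivalent to the unit vector basis of $\ell_1$, contradicting weak nullity. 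Making this quantitative — controlling the interaction of consecutive blocks in the $p$-norm and turning it into a clean $\ell_1$ estimate uniform in $n$ — is the technical heart of the argument, the remaining steps being the routine verifications that the relevant functions stay in the correct Lipschitz classes and that the constants assemble correctly.
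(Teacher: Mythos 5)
Your proposal correctly reduces to the finite-dimensional case and correctly identifies where the difficulty lies, but it does not close that difficulty: the uniform tail estimate (equivalently, the $\ell_1$ lower bound for an embedding of $\F(X,\|\cdot\|^p)$ into $\bigl( \sum \oplus_n \F(X_n,\|\cdot\|^p) \bigr)_{\ell_1}$ built from the block maps $P_n-P_{n-1}$) is exactly the theorem's content, and you explicitly defer it as ``the technical heart'' with only a sketched gliding-hump strategy. This is a genuine gap, and the proposed route is moreover shakier than it looks: the natural block operator $\gamma\mapsto\bigl(\widetilde{(P_n-P_{n-1})}\gamma\bigr)_n$ is not even obviously bounded into the $\ell_1$-sum (each block map has Lipschitz constant of order $K^p$, and these do not sum; even for Dirac measures one only has $\sum_n\|(P_n-P_{n-1})x\|^p\geq\|x\|^p$ with no upper bound in general), and the $\ell_1$-type splitting of a free space along pieces of $M$ normally requires the pieces to be metrically far apart relative to their diameters, which nested FDD ranges through the origin are not. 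The appeal to concavity of $t\mapsto t^p$ ``behaving additively across blocks'' and to Theorem \ref{propo} applied to each $X_n$ gives no uniformity in $n$ and does not substitute for the missing estimate.

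The paper avoids this obstacle entirely by never trying to control weakly null sequences, or to embed the infinite-dimensional free space into an $\ell_1$-sum, directly. Instead it works at the level of \emph{two-point} norming: after passing to an equivalent $p$-norm making the FDD monotone (so the $P_N$ are $1$-Lipschitz for $d_p$), it fixes $x\neq y$ and $\varepsilon>0$, chooses $N$ with $d_p(x,P_Nx)<\varepsilon$ and $d_p(y,P_Ny)<\varepsilon$, and invokes Corollary \ref{cormp} to get that $lip_0$ of the finite-dimensional piece $A=(\sum_{k=1}^N X_k,d_p)$ is $1$-norming; Proposition \ref{lemma1} (compose a norming little-Lipschitz function on $A$ with $P_N$) then yields that $lip_0(X,\|\cdot\|^p)$ itself is $1$-norming. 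All the work on weakly null sequences is already encapsulated in Proposition \ref{proposchur} (Kalton's argument: $lip_0(M)$ $1$-norming implies $\F(M)$ has the Schur property), whose internal $\ell_1$-decomposition is the radial one over balls $\overline{B}(0,2^k)$, not a decomposition along the FDD. The lesson is that the finite-dimensional approximation only needs to be exploited pointwise (on pairs of points, where strong convergence $P_Nx\to x$ suffices), not uniformly on bounded sequences; your approach demands the latter, which is precisely what strong operator convergence cannot deliver.
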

Finally in Section \ref{section4} we give some open problems and make some related comments.

\section{Schur properties and Lipschitz-free spaces} \label{section2}

In this note, we only consider real Banach spaces and we write $X \equiv Y$ to say that the Banach spaces $X$ and $Y$ are linearly isometric.

\subsection{The little Lipschitz space and the uniform separation of points}
\begin{defi} \label{deflip}
Let $M$ be a metric space. We define the two following closed subspaces of $Lip_0(M)$ (the second one differs from the first one when $M$ is unbounded).
\begin{align*}
lip_0(M)&:=\left\{f\in Lip_0(M):  \lim\limits_{\varepsilon\rightarrow 0} \sup\limits_{0<d(x,y)<\varepsilon} \frac{|f(x)-f(y)|}{d(x,y)}=0\right\},\\
S_0(M)&:=\left\{ f\in lip_0(M) :  \lim\limits_{r\rightarrow \infty} \sup_{\stackrel{x \text{ or } y\notin B(0,r)}{x\neq y}} \frac{| f(x)-f(y)|}{d(x,y)}=0\right\}.
\end{align*}
\end{defi}

The first space $lip_0(M)$ is called the little Lipschitz space over $M$. This terminology first appeared in \cite{weaver} where they were considered on compact metric spaces.
\begin{defi}
We will say that a subspace $S\subset Lip_0(M)$ \emph{separates points uniformly} if there exists a constant $c\geq 1$ such that for every $x, y\in M$ there is $f\in S$ satisfying $||f||_L\leq c$ and $f(x)-f(y)=d(x,y)$. We then say that a subspace $Z$ of $X^*$, where $X$ is a Banach space, is \textit{$C$-norming} (with $C \geq 1$) if for every $x$ in $X$, $$\|x\| \leq C \disp{\sup_{z^* \in B_Z}|z^*(x)|}.$$ It is well known that $lip_0(M)$ or $S_0(M)$ are $C$-norming for some $C\geq 1$ if and only if they separate points uniformly (see Proposition 3.4 in \cite{kalton2004}).
\end{defi} 
Weaver showed in \cite{weaver} (Theorem 3.3.3) that if $M$ is a compact metric space then $lip_0(M)$ separates points uniformly if and only if it is an isometric predual of $\mathcal F(M)$. More generally, Dalet showed in \cite{daletproper} that a similar result holds for proper metric spaces. We state here this result for future reference.
\begin{thm}[Dalet] \label{propdalet}
Let $M$ be a proper metric space. Then $S_0(M)$ separates points uniformly if and only if it is an isometric predual of $\mathcal F(M)$.
\end{thm}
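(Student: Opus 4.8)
The plan is to realise $S_0(M)$ as a norm-closed subspace of $Lip_0(M)=\F(M)^*$ and to analyse the canonical evaluation map $\iota\colon\F(M)\to S_0(M)^*$ given by $\langle\iota(\mu),f\rangle=\langle f,\mu\rangle$. Saying that $S_0(M)$ is an isometric predual of $\F(M)$ is precisely saying that $\iota$ is a surjective linear isometry, so the whole statement becomes a duality question about $\iota$. The easy implication is then immediate: if $\iota$ is a surjective isometry, then $\|\mu\|=\sup\{\langle f,\mu\rangle: f\in S_0(M),\ \|f\|_L\le 1\}$ for every $\mu$, so $S_0(M)$ is $1$-norming, and by Proposition 3.4 of \cite{kalton2004} a norming subspace of $Lip_0(M)$ separates points uniformly.

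For the converse I would invoke the following general principle: if $S$ is a norm-closed subspace of a dual space $Z^*$, then the canonical map $\iota\colon Z\to S^*$ is a surjective isometry as soon as (a) $S$ is $1$-norming for $Z$ and (b) the unit ball $B_Z$ is compact for the topology $\sigma(Z,S)$. Here (a) already makes $\iota$ an isometric embedding with $\iota(B_Z)\subseteq B_{S^*}$, while (b) makes $\iota(B_Z)$ a $\sigma(S^*,S)$-closed convex set; a Hahn--Banach separation in the dual pair $(S^*,S)$, together with $\sup_{z\in B_Z}\langle s,z\rangle=\|s\|$, then forces $\iota(B_Z)=B_{S^*}$, i.e. surjectivity. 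Thus everything reduces to proving (a) and (b) for $Z=\F(M)$ and $S=S_0(M)$ under the sole hypothesis that $S_0(M)$ separates points uniformly.

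For (a) the task is to upgrade the $C$-norming granted for free by uniform separation to genuine $1$-norming. It is enough to norm finitely supported $\mu$, for which $\|\mu\|_{\F(M)}$ is attained by some $g\in Lip_0(M)$ with $\|g\|_L=1$. I would then replace $g$, on the finite support of $\mu$, by an element of $S_0(M)$ of norm arbitrarily close to $1$, exploiting that $S_0(M)$ is a sublattice of $Lip_0(M)$ (stable under $\max$ and $\min$, which preserve local flatness and the decay at infinity and do not increase the Lipschitz constant) together with the separating functions supplied by uniform separation, in a McShane-type expression $\max_i\min_j(\cdots)$ matching $g$ on the support up to an error. Properness enters here to secure the decay condition defining $S_0(M)$: the support lies in some ball $B(0,r)$, balls are compact, and the approximant can be cut off outside a larger ball at negligible cost to its Lipschitz constant.

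Step (b) is where the real work and the main obstacle lie. Any net in $B_{\F(M)}\subseteq B_{\F(M)^{**}}=B_{Lip_0(M)^*}$ has a weak-$*$ convergent subnet with limit $\Phi\in B_{Lip_0(M)^*}$, but a priori $\Phi$ need not belong to $\F(M)$, and one must show that the coarser limit in $\sigma(\F(M),S_0(M))$ does lie there. This is exactly where properness is indispensable: writing $M=\bigcup_n\overline{B(0,n)}$ as a countable union of compact balls and invoking Weaver's isometric identification $lip_0(K)^*\equiv\F(K)$ for compact $K$ (Theorem 3.3.3 of \cite{weaver}), the decay at infinity built into the definition of $S_0(M)$ guarantees that pairing against $S_0(M)$-functions only feels the behaviour on large compact balls, which pins the limit down inside $\F(M)$. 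Combining (a) and (b) with the duality principle then yields that $\iota$ is a surjective isometry, proving that $S_0(M)$ is an isometric predual of $\F(M)$. I expect the compactness in (b), and its delicate interaction with the decay condition that distinguishes $S_0(M)$ from $lip_0(M)$, to be the crux, the $1$-norming sharpening in (a) being a secondary technical point.
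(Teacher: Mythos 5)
First, a point of reference: the paper does not prove this statement at all --- it is quoted from \cite{daletproper}, where Dalet obtains it via the Petunin--Plichko theorem (in a separable Banach space $X$, a closed $1$-norming subspace of $X^*$ consisting of norm-attaining functionals is an isometric predual of $X$), after establishing two things: that uniform separation upgrades to $1$-norming, and that properness together with the two flatness conditions forces every $f\in S_0(M)$ to attain its Lipschitz norm at a molecule $\frac{\delta_M(x)-\delta_M(y)}{d(x,y)}$ (the defining supremum is effectively taken over a compact set of pairs bounded away from the diagonal). Your skeleton is sound: the easy direction is correct, and your ``general principle'' is a correct instance of the Dixmier--Ng criterion. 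But both verifications (a) and (b) have genuine gaps, and they sit exactly where the content of the theorem lies. For (a): lattice operations do not decrease Lipschitz constants, so any $\max_i\min_j$ expression built from the separating functions, which are only $c$-Lipschitz, is again (roughly) $c$-Lipschitz; your construction therefore yields $c$-norming --- this is essentially the proof of Kalton's Proposition 3.4 (separation $\Leftrightarrow$ norming) --- and not $1$-norming. The obstruction is already visible for a single molecule $\mu=\frac{\delta_M(x)-\delta_M(y)}{d(x,y)}$: norming it within $\ep$ requires some $f\in S_0(M)$ with $\|f\|_L\le 1+\ep$ and $f(x)-f(y)\approx d(x,y)$, i.e.\ separation with constant $1+\ep$, which is precisely the self-improvement to be proven. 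That self-improvement is a theorem in its own right (in the compact case it is Weaver's Theorem 3.3.3, cited in the paper; extending it to proper $M$ is part of Dalet's work), and the phrase ``of norm arbitrarily close to $1$'' in your sketch is an assertion, not an argument.

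For (b), the reasoning is circular: once (a) is granted, the claim that every $\sigma(\F(M),S_0(M))$-limit of a net in $B_{\F(M)}$ is again represented by an element of $\F(M)$ is equivalent to the surjectivity of $\iota$ you are trying to establish, and ``the decay at infinity pins the limit down inside $\F(M)$'' is a hope rather than a proof. Moreover, your appeal to Weaver's compact duality on the balls $\overline{B}(0,n)$ runs the wrong way: restriction maps $S_0(M)\to lip_0(\overline{B}(0,n))$, so to transfer a functional on $S_0(M)$ to one on $lip_0(\overline{B}(0,n))$ you would need an extension operator $lip_0(\overline{B}(0,n))\to S_0(M)$ with norm control $1+\ep$; such extensions exist for proper $M$ but are a nontrivial lemma (McShane-type extensions destroy local flatness and decay at infinity), and proving it is part of Dalet's machinery, not a free step. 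This is exactly why the known proof replaces your compactness condition (b) by the norm-attainment hypothesis of Petunin--Plichko, where properness enters through an easy and self-contained compactness argument. In short: your high-level architecture is viable, but steps (a) and (b) each conceal the actual theorem, so the proposal as it stands does not constitute a proof.
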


The following proposition will be useful in Section \ref{section3}. It provides conditions on $M$ to ensure that $lip_0(M)$ $1$-norming.

\begin{prop} \label{lemma1}
Let $M$ be a metric space. Assume that for every $x\neq y \in M$ and $\varepsilon>0$, there exist $N \subseteq M$ and a $(1+\ep)$-Lipschitz map $T$: $M \to N$ such that $lip_0(N)$ is $1$-norming for $\F(N)$, $d(Tx,x)\leq \varepsilon$ and $d(Ty,y) \leq \varepsilon$. Then $lip_0(M)$ is $1$-norming.
\end{prop}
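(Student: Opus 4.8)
The plan is to prove the equivalent statement that $\|\mu\|_{lip_0}:=\sup\{\langle f,\mu\rangle : f\in lip_0(M),\ \|f\|_L\le 1\}$ coincides with $\|\mu\|_{\F(M)}$ for every $\mu\in\F(M)$. Since $lip_0(M)\subseteq Lip_0(M)=\F(M)^*$ we always have $\|\mu\|_{lip_0}\le\|\mu\|$, so only the reverse inequality is at stake; moreover $\|\cdot\|_{lip_0}$ is $\|\cdot\|$-continuous and the finitely supported measures are dense in $\F(M)$, so it suffices to fix a finitely supported $\mu=\sum_{i=1}^n a_i\delta_M(x_i)$ and $\varepsilon>0$ and to produce $g\in lip_0(M)$ with $\|g\|_L\le 1$ and $\langle g,\mu\rangle\ge\|\mu\|-o(1)$ as $\varepsilon\to0$.

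First I would transport $\mu$ into a better subspace. Applying the hypothesis, I obtain $N\subseteq M$ (with $0\in N$ and $T0=0$ after a harmless base-point normalisation) with $lip_0(N)$ $1$-norming for $\F(N)$, together with a $(1+\varepsilon)$-Lipschitz map $T\colon M\to N$ moving the relevant support points by at most $\varepsilon$. Linearising $\delta_N\circ T$ yields $\widehat T\colon\F(M)\to\F(N)$ with $\|\widehat T\|\le1+\varepsilon$ and $\widehat T\mu=\sum_i a_i\delta_N(Tx_i)$. Since $lip_0(N)$ is $1$-norming, I pick $h\in lip_0(N)$ with $\|h\|_L\le1$ and $\langle h,\widehat T\mu\rangle\ge\|\widehat T\mu\|_{\F(N)}-\varepsilon$, and pull it back by composition, setting $g:=(h\circ T)/(1+\varepsilon)$. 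The point to check is that $g\in lip_0(M)$: the composition of a little-Lipschitz function with a Lipschitz map is again little-Lipschitz, because for $u\ne v$ one has $|h(Tu)-h(Tv)|/d(u,v)\le(1+\varepsilon)\,|h(Tu)-h(Tv)|/d(Tu,Tv)$ whenever $Tu\ne Tv$ (the quotient being $0$ otherwise), and $d(Tu,Tv)\to0$ as $d(u,v)\to0$. Clearly $\|g\|_L\le1$.

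It then remains to estimate the pairing. Because $\delta_N$ is an isometry and $\F(N)$ embeds isometrically into $\F(M)$ (as $N\subseteq M$), one has $\|\widehat T\mu\|_{\F(N)}=\|\widehat T\mu\|_{\F(M)}$, while $\|\widehat T\mu-\mu\|_{\F(M)}\le\sum_i|a_i|\,d(Tx_i,x_i)\le\varepsilon\sum_i|a_i|$. Hence, using $T0=0$ so that $\langle g,\mu\rangle=(1+\varepsilon)^{-1}\langle h,\widehat T\mu\rangle$, I get
\[
\langle g,\mu\rangle\ \ge\ \frac{1}{1+\varepsilon}\Big(\|\mu\|-\varepsilon\sum_i|a_i|-\varepsilon\Big),
\]
and letting $\varepsilon\to0$ yields $\|\mu\|_{lip_0}\ge\|\mu\|$, as desired.

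The main obstacle is concentrated in the transport step. The hypothesis is phrased for a single pair, whereas the pushforward above requires one map $T$ moving the whole finite support of $\mu$ into a single $1$-norming subspace $N$: pairwise control is not enough, since otherwise $\widehat T\mu$ need not stay close to $\mu$ in $\F(M)$ and the lower bound $\|\widehat T\mu\|_{\F(N)}\ge\|\mu\|-o(1)$ breaks down. Securing this simultaneous control, and carefully tracking the base point, is where the real work lies. As a weaker but unconditional by-product, the very same composition trick applied to a molecule $m_{xy}$ (choosing $h$ nearly norming $m_{Tx,Ty}$ in $\F(N)$ and checking that $g=h\circ T$ nearly norms $m_{xy}$) shows that $lip_0(M)$ separates points uniformly with constant arbitrarily close to $1$, which by Kalton's Proposition~3.4 (and, in the proper case, by Theorem~\ref{propdalet}) already forces $lip_0(M)$ to be norming.
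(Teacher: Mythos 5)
Your main route, as you yourself concede, does not run from the stated hypothesis: the assumption only provides, for \emph{one} pair $x\neq y$ and one $\ep>0$, a map $T$ controlling $d(Tx,x)$ and $d(Ty,y)$. Nothing controls $d(Tz,z)$ at the remaining points of the support of $\mu$ (the Lipschitz condition on $T$ only bounds $d(Tz,z)$ by roughly $2d(z,x)+\ep$), so the estimate $\|\widehat T\mu-\mu\|\le\ep\sum_i|a_i|$, on which your lower bound for $\la h,\widehat T\mu\ra$ rests, is simply unavailable. This is not a technicality one can "secure with more work": in the paper's own application (Theorem \ref{corlp}), $T$ is an FDD projection $P_N$ chosen for the pair $x,y$, and it moves points whose expansions have large tails by a large amount, so the simultaneous control you would need genuinely fails. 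Hence the first three paragraphs of your proposal do not constitute a proof of Proposition \ref{lemma1}.

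However, your closing remark is not a "weaker by-product" — it \emph{is} the statement, and it is exactly the paper's proof. The paper fixes $x\neq y$ and $\ep>0$, picks $f\in lip_0(N)$ with $\|f\|_L\le 1+\ep$ and $f(Tx)-f(Ty)=d(Tx,Ty)$, and sets $g=f\circ T$; then $g\in lip_0(M)$ (your composition argument is the right verification), $\|g\|_L\le(1+\ep)^2$, and $|g(x)-g(y)|=d(Tx,Ty)\ge d(x,y)-2\ep$. After rescaling, this says $lip_0(M)$ separates points uniformly with constant arbitrarily close to $1$, and then the equivalence the paper quotes (Kalton's Proposition 3.4, in the quantitative form where the norming constant matches the separation constant — which the paper's proof needs just as much as yours) gives that $lip_0(M)$ is $(1+\delta)$-norming for every $\delta>0$. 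The one observation you are missing is that this already means $1$-norming: for each fixed $\mu\in\F(M)$ the inequality $\|\mu\|\le(1+\delta)\sup_{f\in B_{lip_0(M)}}|\la f,\mu\ra|$ passes to the limit as $\delta\to0$. In short, the proposition is deliberately a pairwise statement with a pairwise hypothesis, proved by a pairwise argument; discard the transport scheme, keep your last paragraph, add the limiting observation, and you have recovered the paper's proof.
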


\begin{proof}
Let $x\neq y \in M$ and $\varepsilon>0$. By our assumptions there exist $N \subseteq M$ and a $(1+\ep)$-Lipschitz map $T$: $M \to N$ such that $lip_0(N)$ is $1$-norming, $d(Tx,x)\leq \varepsilon$ and $d(Ty,y) \leq \varepsilon$. Since $lip_0(N)$ is $1$-norming there exists $f\in lip_0(N)$ verifying $\|f\|_L \leq 1+\ep $ and $|f(Tx)-f(Ty)|= d(Tx,Ty)$. Now we define $g=f \circ T$ on $M$. By composition $g$ is $(1+\ep)^2$-Lipschitz and  $g \in lip_0(M)$. Then a direct computation shows that $g$ does the work.
\begin{eqnarray*}
|g(x)-g(y)| &=& |f(Tx)-f(Ty)|= d(Tx,Ty) \\
&\geq& d(x,y) - d(x,Tx) - d(y,Ty) \\
&\geq& d(x,y) -2\ep.
\end{eqnarray*}
This ends the proof.
\end{proof}

\subsection{The Schur property}

We now turn to the study of the classical Schur property. We first recall its definition.

\begin{defi}
Let $X$ be a Banach space. We say that $X$ has \textit{the Schur property} if every weakly null sequence $(x_n)_{n \in \N}$ in $X$ is also $\| \cdot \|$-convergent to $0$. 
\end{defi}

According to \cite{kalton2004}, by a \textit{gauge} we mean a continuous, subadditive and increasing function $\omega:\mathbb [0,\infty)\longrightarrow [0,\infty)$ verifying $\omega(0)=0$ and $\omega(t)\geq t$ for every $t\in [0,1]$. We say that a gauge $\omega$ is \textit{non-trivial} whenever $\lim\limits_{t\rightarrow 0}\frac{\omega(t)}{t}=\infty$. In \cite{kalton2004} (Theorem 4.6), Kalton proved that if $(M,d)$ is a metric space and $\omega$ is a nontrivial gauge then the space $\F(M,\omega \circ d)$ has the Schur property.  A careful reading of his proof reveals that the key ingredient is actually the fact that $lip_0(M,\omega \circ d)$ is always $1$-norming (Proposition 3.5 in \cite{kalton2004}). This leads us to the following result. We include it here for completeness even though the proof is very similar to Kalton's original argument.

\begin{prop} \label{proposchur}
Let $M$ be a metric space such that $lip_0(M)$ is $1$-norming. Then the space $\F(M)$ has the Schur property.
\end{prop}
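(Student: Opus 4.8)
The plan is to argue by contradiction, combining the $1$-norming by \emph{flat} functions with a gliding hump construction. Suppose $(\mu_n)_n$ is weakly null in $\F(M)$ but does not converge to $0$ in norm. Passing to a subsequence we may assume $\|\mu_n\|\geq \delta$ for some $\delta>0$ and all $n$. Since $lip_0(M)$ is $1$-norming we have $\|\mu\|=\sup\{\langle \mu,f\rangle : f\in lip_0(M),\ \|f\|_L\leq 1\}$, so for each $n$ we can select $f_n\in lip_0(M)$ with $\|f_n\|_L\leq 1$ and $\langle \mu_n,f_n\rangle\geq \delta/2$. On the other hand, weak nullity means exactly that $\langle \mu_n,g\rangle\to 0$ for every \emph{fixed} $g\in Lip_0(M)=\F(M)^*$. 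The crucial point is that the admissible test functions range over all of $Lip_0(M)$, which is strictly larger than the norming subspace $lip_0(M)$; the contradiction will come from manufacturing a single fixed $g\in Lip_0(M)$ that simultaneously ``sees'' infinitely many of the $\mu_n$.

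Next I would replace each $\mu_n$ by a finitely supported element $\nu_n=\sum_j \lambda_j^{(n)}\big(\delta_M(u_j^{(n)})-\delta_M(v_j^{(n)})\big)$ with $\|\mu_n-\nu_n\|$ as small as desired, so that $\langle \nu_n,f_n\rangle\geq \delta/3$ still holds. Because $f_n\in lip_0(M)$, there is a scale $\ep_n>0$ below which the difference quotients of $f_n$ are negligible, so the contribution to $\langle \nu_n,f_n\rangle$ coming from pairs $(u_j^{(n)},v_j^{(n)})$ with $d(u_j^{(n)},v_j^{(n)})<\ep_n$ can be discarded; a fixed proportion of the pairing must then be carried by pairs separated at a scale bounded below. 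Using weak nullity (which kills any fixed finite piece) together with a diagonalisation that matches up the scales $\ep_n$ with the support geometry, I would extract a subsequence along which the ``active'' parts of the $\nu_n$ are essentially disjointly supported.

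Finally comes the gliding hump: on each active region I glue in a suitably localised and truncated copy of the corresponding $f_n$, using the flatness to keep the Lipschitz constant of the transitions between regions under control, thereby producing one function $g\in Lip_0(M)$ with $\|g\|_L$ bounded and $\langle \mu_{n_k},g\rangle\geq \delta/4$ along the subsequence. This contradicts $\langle \mu_{n_k},g\rangle\to 0$, and the Schur property follows. The hard part will be precisely this last extraction-and-gluing step: one must arrange the active supports to be genuinely disjoint so that the localised pieces of the $f_n$ do not interfere, while keeping the glued function globally Lipschitz, and it is here that the flatness afforded by $lip_0(M)$, rather than mere Lipschitzness, is indispensable.
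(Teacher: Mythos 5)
Your overall strategy is sound in its logic (a single fixed $g\in Lip_0(M)$ pairing $\geq \delta/4$ with infinitely many $\mu_{n_k}$ would indeed contradict weak nullity), and your first move is correct: flatness of $f_n$ lets you discard the pairs at scales below some $\ep_n$ --- though note this only works after fixing a \emph{near-optimal} representation $\nu_n=\sum_j\lambda_j^{(n)}(\delta_M(u_j^{(n)})-\delta_M(v_j^{(n)}))$ with $\sum_j|\lambda_j^{(n)}|\,d(u_j^{(n)},v_j^{(n)})\leq(1+\ep)\|\nu_n\|$, since for an arbitrary representation the small-scale pairs can carry unbounded mass and the flatness estimate gives nothing. (You also never reduce to bounded $M$; the paper does this first, via Kalton's $\ell_1$-decomposition of $\F(M)$ into free spaces over balls, and without it even the localization and truncation you propose are problematic.)

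The genuine gap is the extraction-and-gluing step, and it is not a technicality: it is the entire content of the theorem. Weak nullity says only that $\la \mu_n,g\ra\to 0$ for each \emph{fixed} $g$; it gives no geometric separation of supports, and in free spaces the actual structure is the opposite of what a gliding hump needs. Lemma 4.5 in \cite{kalton2004} --- the key tool in the paper's own proof --- shows that for bounded $M$ a weakly null sequence is, up to arbitrarily small error in norm, supported in $[E]_\delta$ for a \emph{single} finite set $E$ independent of $n$. So the ``active parts'' of the $\nu_n$ typically pile up at the same finitely many points, at smaller and smaller scales $\ep_n\to 0$; spatially disjoint active regions need not exist, and your disjointification claim has no proof. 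What would have to be glued is infinitely many humps sitting at the same locations across infinitely many scales, keeping simultaneously (a) $\|g\|_L$ bounded and (b) the cross terms $\la\mu_{n_k},g_j\ra$ small --- including for humps $g_j$ built \emph{after} stage $k$, where weak nullity is useless and one would need, e.g., $\|g_j\|_\infty$ tiny relative to the mass of the earlier $\nu_{n_k}$ while still having $\la\nu_{n_j},g_j\ra\geq\delta/4$; nothing in the sketch delivers this. The paper's proof circumvents exactly this difficulty by never gluing more than two functions at a time: one flat $f$ norming a fixed $\gamma$, plus for each $n$ a functional $f_n$ vanishing on $E$ that norms the part of $\gamma_n$ far from the finite-dimensional space $\F(E)$; the two-scale case analysis gives $\|f+f_n\|_L\leq 1+\ep$, hence the inequality $\liminf_n\|\gamma+\gamma_n\|\geq\|\gamma\|+\frac{1}{2}$, and then Kalton's iteration produces a subsequence equivalent to the $\ell_1$-basis, contradicting weak nullity. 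As it stands, your proposal identifies the right first ingredient but leaves the heart of the proof unproven, and the picture it relies on (disjoint humps) is the wrong one.
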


\begin{proof}
According to Proposition 4.3 in \cite{kalton2004}, for every $\ep >0$, $\F(M)$ is $(1+\ep)$-isomorphic to a subspace of $( \sum_{n \in \Z} \F(M_k) )_{\ell_1}$ where $M_k$ denotes the ball $\overline{B}(0,2^k) \subseteq M$ centered at $0$ and of radius $2^k$. Moreover the Schur property is stable under $\ell_1$-sums, under isomorphism and passing to subspaces. So it suffices to prove the result under the assumption that $M$ has finite radius. 

Let us consider $(\gamma_n)_n$ a normalized weakly null sequence in $\F(M)$. We will show that 
\begin{eqnarray} \label{lkk}
\forall \gamma \in \F(M), \, \liminf_{n \to + \infty} \|\gamma + \gamma_n\| \geq \|\gamma \| + \frac{1}{2},
\end{eqnarray}
from which it is easy to deduce that for every $\ep>0$, $(\gamma_n)_n$ admits a subsequence $(2 +\ep)$-equivalent to the $\ell_1$-basis (see the end of the proof of Proposition $4.6$ in \cite{kalton2004}). This contradicts the fact that $(\gamma_n)_n$ is weakly null.

Fix $\ep >0$ and $\gamma\in \F(M)$. We can assume that $\gamma$ is of finite support. Pick $f \in lip_0(M)$ with $\|f\|_L=1$ and $\la f, \gamma\ra > \|\gamma \| - \ep$. Next pick $\Theta >0 $ so that if $d(x,y) \leq \Theta$ then $|f(x) - f(y)| < \ep d(x,y)$. Choose $\delta < \frac{\ep \Theta}{2(1+\ep)} $. Then by Lemma 4.5 in \cite{kalton2004} we have
$$\inf_{|E|<\infty} \sup_{n} dist(\gamma_n, \F([E]_{\delta})) = 0,$$
where $[E]_{\delta} = \{x \in M : d(x,E)\leq \delta \}$. Thus there exists a finite set $E\subset M$ such that $E$ contains the support of $\gamma$ and such that for each $n$ we can find $\mu_n \in \F([E]_{\delta})$ with $\|\gamma_n - \mu_n\| < \ep$. Remark that $\F(E)$ is a finite dimensional space. Thus $\liminf_{n \to + \infty} dist(\gamma_n,\F(E)) \geq \frac{1}{2}$. Then by Hahn-Banach theorem, for every $n$ there exists $f_n \in Lip_0(M)$ verifying $\|f_n\|_{L}\leq 1+ \ep$, $f_n(E)=\{0\}$ and $\liminf_{n \to + \infty} \la f_n , \gamma_n \ra \geq \frac{1}{2}$. Now we define $g_n = (f+f_n)_{|[E]_{\delta}}$, then $g_n \in Lip_0([E]_{\delta})$ and we will show that $\|g_n\|_{L} < 1 +\ep$. We will distinguish two cases to show this last property. First suppose that $x$ and $y$ are such that $d(x,y)\leq \Theta$, then
\begin{eqnarray*}
|g_n(x)-g_n(y)| \leq |f(x) - f(y)| + |f_n(x)-f_n(y)| \leq \ep d(x,y) + d(x,y) = (1+\ep) d(x,y).
\end{eqnarray*}
Second if $x$ and $y$ are such that $d(x,y)> \Theta$, then there exist $u,v\in E$ with $d(x,u)\leq \delta$ and $d(y,v)\leq \delta$, so that
\begin{eqnarray*}
|g_n(x)-g_n(y)| &\leq&|f(x) - f(y)| + |f_n(x)| + |f_n(y)| \\
&=& |f(x) - f(y)| + |f_n(x) - f_n(u)| + |f_n(y) - f_n(v)| \\
&\leq& d(x,y) + 2(1+\ep)\delta \leq d(x,y) + \ep \Theta \leq (1+ \ep)d(x,y).
\end{eqnarray*}
We extend those functions $g_n$ to $M$ with the same Lipschitz constant and we still denote those extensions $g_n$ for convenience. We now estimate the desired quantities.
\begin{eqnarray*}
\|\gamma + \mu_n \| \geq \frac{1}{1+\ep} \la g_n, \gamma + \mu_n \ra = \frac{1}{1+\ep}(\la f, \gamma \ra + \la f, \mu_n \ra + \la f_n, \gamma \ra +\la f_n, \mu_n \ra),
\end{eqnarray*}
where
\begin{enumerate}[(i)]
\item $\la f, \gamma \ra > \|\gamma\| -\ep$.
\item $\limsup\limits_{n \to \infty} | \la f, \mu_n \ra | \leq \ep$, since $(\gamma_n)_n$ is weakly null and $\|\gamma_n-\mu_n\|<\ep$.
\item $\la f_n, \gamma \ra = 0$, since $\gamma \in \F(E)$.
\item $\liminf \limits_{n \to \infty} \la f_n, \mu_n \ra \geq \frac{1}{2} - \ep$, since  $\liminf \limits_{n \to  \infty} \la f_n, \gamma_n \ra > \frac{1}{2}$.
\end{enumerate}
Thus,
\begin{eqnarray*}
\liminf \limits_{n \to  \infty} \|\gamma + \gamma_n\| &\geq& \frac{1}{1+\ep}(\|\gamma \| + \frac{1}{2}- 3 \ep) - \ep. \\
\end{eqnarray*}
Since $\ep$ is arbitrary, this proves (\ref{lkk}).
\end{proof}

\subsection{Embeddings into \texorpdfstring{$\ell_1$}{l1}-sums}

Before stating the main result of this section we recall a few classical definitions. We say that a Banach space $X$ has the \textit{approximation property} (AP) if for every $\ep>0$, for every compact set $K \subset X$, there exists a finite rank operator $T \in \mathcal B(X)$ such that $\|Tx-x\| \leq \ep$ for every $x \in K$. Let $\lambda \geq 1$, if in the above definition T can always be chosen so that $\|T\|\leq \lambda$, then we say that $X$ has the \textit{$\lambda$-bounded approximation property } ($\lambda$-(BAP)). When $X$ has the $1$-(BAP) we say that $X$ has the \textit{metric approximation property} (MAP). We can now state and prove our first main result.

\begin{th1} \label{propo}
Let $M$ be a proper metric space such that $S_0(M)$ separates points uniformly and such that $\F(M)$ has (MAP). Then for any $\ep > 0$, there exists a sequence $(E_n)_n$ of finite-dimensional subspaces of $\F(M)$ such that $\F(M)$ is $(1+\ep)$-isomorphic to a subspace of $\left( \sum\disp{\oplus_n E_n } \right)_{\ell_1}$.
\end{th1}

\begin{proof}
The proof is based on three results. The first ingredient is the following lemma (Lemma $3.1$ in \cite{godkaltonli}):
\begin{lem}[Godefroy, Kalton and Li] \label{lemgodkalli}
Let $V$ be a subspace of $c_0(\N)$ with (MAP). Then for any $\ep > 0$, there exists a sequence $(E_n)_n$ of finite-dimensional subspaces of $V^*$ and a $weak^*$-to-$weak^*$ continuous linear map $T$: $V^* \to \left( \sum\disp{\oplus_n E_n } \right)_{\ell_1}$ such that for all $x^* \in V^*$
\begin{eqnarray*}
(1-\ep) \|x^*\| \leq \|Tx^*\| \leq (1+\ep) \|x^*\|. 
\end{eqnarray*}
\end{lem}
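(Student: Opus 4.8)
The plan is to produce the embedding by taking adjoints of a well-chosen approximating sequence on $V$, and to read off the $\ell_1$-sum structure from the fact that in $c_0$ disjointly supported vectors have \emph{sup}-additive norm while, dually, their functionals are $\ell_1$-additive. Since $V$ is separable and has (MAP), I would start from finite-rank operators $A_m\colon V\to V$ with $\|A_m\|\le 1$ and $A_m x\to x$ for every $x\in V$. The heart of the construction is to refine this sequence, by a gliding-hump argument using that every finite-dimensional subspace of $c_0$ is almost finitely supported, into finite-rank operators $S_k\colon V\to V$ with $S_k\to I$ strongly whose consecutive differences $D_k:=S_k-S_{k-1}$ (with $S_0=0$) enjoy two properties: $\|D_k\|\le 1+\ep_k$ with $\sum_k\ep_k<\ep$, and the ranges $D_k(V)$ are, up to a summable error, supported on successive pairwise disjoint coordinate blocks $I_k=(n_{k-1},n_k]$ of $c_0$. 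Granting this, I set $E_k:=D_k^*(V^*)$, a finite-dimensional subspace of $V^*$, and define the linear map $T\colon V^*\to\left(\sum\disp{\oplus_k E_k}\right)_{\ell_1}$ by $Tx^*=(D_k^*x^*)_k$.

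The lower estimate is essentially free. Telescoping gives $\sum_{k\le N}D_k^*=S_N^*$, and since $\langle S_N^*x^*,v\rangle=\langle x^*,S_Nv\rangle\to\langle x^*,v\rangle$ for every $v\in V$, the series $\sum_k D_k^*x^*$ converges $weak^*$ to $x^*$. Weak$^*$-lower semicontinuity of the norm together with the triangle inequality then yields $\|x^*\|\le\liminf_N\|\sum_{k\le N}D_k^*x^*\|\le\sum_k\|D_k^*x^*\|=\|Tx^*\|$, which is even stronger than the required inequality $(1-\ep)\|x^*\|\le\|Tx^*\|$.

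The upper estimate is where the block construction is used, and I would argue dually. For each $k$ choose $w_k\in V$ with $\|w_k\|\le 1$ and $\langle D_k^*x^*,w_k\rangle\ge\|D_k^*x^*\|-\delta_k$; then for every $N$ the vector $\sum_{k\le N}D_k w_k$ lies in $V$, and because the summands $D_kw_k$ sit (almost) in the disjoint blocks $I_k$ with $\|D_kw_k\|\le 1+\ep_k$, their sup-norm is governed by a single largest block, so $\|\sum_{k\le N}D_kw_k\|\le 1+\ep$ up to the summable errors. Pairing with $x^*$ gives $\sum_{k\le N}(\|D_k^*x^*\|-\delta_k)\le\langle x^*,\sum_{k\le N}D_kw_k\rangle\le(1+\ep)\|x^*\|$, and letting $N\to\infty$ yields $\|Tx^*\|=\sum_k\|D_k^*x^*\|\le(1+\ep)\|x^*\|$ after absorbing $\sum_k\delta_k$ into $\ep$; in particular $T$ is well defined into the $\ell_1$-sum. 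The main obstacle is precisely the construction promised in the first paragraph: arranging that the difference operators have norm as close to $1$ as desired --- \emph{not} merely bounded, which a crude telescoping of norm-one operators would only give up to a factor $2$ --- while simultaneously localizing their ranges on disjoint coordinate blocks with summable errors. This near-isometric, almost-coordinate-block refinement of the (MAP) sequence is the genuine technical content, and it is where the hypotheses $V\subseteq c_0$ and $\|A_m\|\le 1$ are both essential.

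It remains to check that $T$ is $weak^*$-to-$weak^*$ continuous. Since each $E_k$ is finite-dimensional, $\left(\sum\disp{\oplus_k E_k}\right)_{\ell_1}$ is canonically the dual of $\left(\sum\disp{\oplus_k E_k^*}\right)_{c_0}$, so by the Krein--\v{S}mulian theorem (and the metrizability of bounded $weak^*$ subsets of $V^*$, as $V$ is separable) it suffices to verify that $x^*\mapsto\langle Tx^*,\varphi\rangle=\sum_k\langle D_k^*x^*,\varphi_k\rangle$ is $weak^*$-continuous on bounded subsets of $V^*$ for each $\varphi=(\varphi_k)_k$ in the predual. Each summand is $weak^*$-continuous because $D_k^*$ is an adjoint and $\varphi_k$ is a functional on the finite-dimensional space $E_k$; as $\|\varphi_k\|\to 0$ and $\sum_k\|D_k^*x^*\|$ is uniformly bounded on bounded sets by the upper estimate, the partial sums converge uniformly, so the sum is $weak^*$-continuous. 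This completes the scheme.
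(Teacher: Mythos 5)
A preliminary remark: the paper does not prove this statement at all; it is quoted verbatim as Lemma 3.1 of Godefroy, Kalton and Li \cite{godkaltonli} and used as a black box in the proof of Theorem \ref{propo}. So your attempt must be judged as a standalone reproof of that external result, and judged that way it has a genuine gap, one you have in fact located yourself. Everything you actually carry out is correct: granting difference operators $D_k=S_k-S_{k-1}$ with $\|D_k\|\le 1+\ep_k$ and ranges almost supported on successive disjoint coordinate blocks, the lower estimate via $weak^*$ lower semicontinuity of the norm (which even gives $\|x^*\|\le\|Tx^*\|$), the upper estimate by pairing with the $w_k$ and using that in $c_0$ almost disjointly supported vectors have sup-additive norm (modulo the small fix of taking $\delta_k$ proportional to $\|x^*\|$, say $\delta_k=\ep 2^{-k}\|x^*\|$, so that the error is multiplicative and can be absorbed, since with $\delta_k$ fixed independently of $x^*$ the bound $\sum_k\|D_k^*x^*\|\le(1+\ep)\|x^*\|+\sum_k\delta_k$ is not the required homogeneous estimate), and the $weak^*$-to-$weak^*$ continuity via Krein--\v{S}mulian are all sound.

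The gap is that the construction announced in your first paragraph, which you then explicitly ``grant,'' is the entire content of the lemma, and the one-line justification offered --- a gliding-hump argument plus the fact that finite-dimensional subspaces of $c_0$ are almost finitely supported --- does not deliver it. Concretely, let $P_m$ denote the basis projection of $c_0$ onto the first $m$ coordinates. From a strongly convergent MAP sequence one gets finite-rank $S_k$ with $\|S_k\|\le 1$ and ranges almost inside $P_{q_k}(c_0)$; but then the ranges of the differences $D_k$ are \emph{nested} (almost inside the first $q_k$ coordinates), not blocked. To localize $D_k$ on the block $(q_{k-1},q_k]$ you need the operator-norm compatibility $P_{q_{k-1}}S_k\approx S_{k-1}$, and strong (pointwise) convergence of $S_k$ to the identity gives smallness of $P_{q_{k-1}}(S_k-S_{k-1})$ only on finite sets of vectors, never in operator norm; no subsequence extraction or small perturbation repairs this. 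Note also that your two requirements are really one: if the compatibility held, then $D_k\approx(I-P_{q_{k-1}})S_k$ and the near-isometric bound $\|D_k\|\lesssim\|S_k\|\le 1$ would follow for free, since $\|I-P_m\|=1$ on $c_0$; without it, telescoping only gives $\|D_k\|\le 2$, which is exactly the factor your upper estimate cannot afford. Producing the compatible, almost-blocked approximating sequence from (MAP) and the embedding $V\subseteq c_0(\N)$ is where the actual proof in \cite{godkaltonli} does its work, so as written your text is an accurate reduction of the lemma to its hardest step, together with correct bookkeeping around it --- not a proof.
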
 
Next, generalizing a proof of Kalton in the compact case (Theorem 6.6 in \cite{kalton2004}), Dalet has proved the following lemma (Lemma 3.9 in \cite{daletproper})
\begin{lem}[Dalet] \label{lemdalet} 
Let $M$ be a proper metric space. Then, for every $\ep>0$, the space $S_0(M)$ is $(1 + \ep)$-isomorphic to a subspace $Z$ of $c_0(\N)$.
\end{lem}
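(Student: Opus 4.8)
The plan is to prove Lemma \ref{lemdalet}, that is, to embed $S_0(M)$ $(1+\ep)$-isomorphically into $c_0(\N)$ when $M$ is proper. The natural strategy is to build a countable collection of evaluation-difference functionals on $S_0(M)$ that together recover the norm up to $(1+\ep)$, while the ``little Lipschitz'' and ``vanishing at infinity'' conditions defining $S_0(M)$ force each individual functional to be small for all but finitely many indices, which is exactly what landing in $c_0$ requires.

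First I would exploit properness: since every closed ball is compact, $M$ is separable and $\sigma$-compact, so I can write $M = \bigcup_k M_k$ with $M_k = \overline{B}(0,2^k)$ each compact. For each $k$, compactness lets me choose a finite $\delta_k$-net of $M_k$ for a suitable sequence $\delta_k \to 0$. The map $f \in S_0(M) \mapsto \big(f(x)\big)_{x}$, with $x$ ranging over the countable union of these nets, should already be injective and roughly norm-preserving; but to get a map into $c_0$ I would instead use normalized difference functionals of the form $e_{x,y}(f) = \frac{f(x)-f(y)}{d(x,y)}$ indexed by pairs $(x,y)$ drawn from the nets. Enumerating all such pairs over all levels $k$ gives a countable family $(e_i)_i$, and I set $T f = (e_i(f))_i$. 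The norm $\|f\|_L = \sup_{x\neq y} \frac{|f(x)-f(y)|}{d(x,y)}$ is approximated by the sup over net-pairs, so after choosing the nets fine enough relative to the little-Lipschitz modulus of the functions, $\|Tf\|_\infty$ is within a factor $(1+\ep)$ of $\|f\|_L$; the lower bound $\|Tf\|_\infty \ge (1-\ep)\|f\|_L$ (or an equivalent two-sided estimate giving the stated $(1+\ep)$-isomorphism) follows by picking a near-optimal pair and replacing it with a nearby net-pair.

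The crux, and the main obstacle, is verifying that $Tf$ actually lies in $c_0(\N)$, i.e. that $e_i(f) \to 0$ along the enumeration. This is where the precise definition of $S_0(M)$ enters and why $lip_0(M)$ alone would not suffice. A given pair $(x,y)$ contributes a large value $\frac{|f(x)-f(y)|}{d(x,y)}$ only in two regimes: either $d(x,y)$ is small, which is controlled by the $lip_0$ condition $\lim_{\ep\to 0}\sup_{0<d(x,y)<\ep}\frac{|f(x)-f(y)|}{d(x,y)} = 0$; or $x$ or $y$ escapes to infinity, which is controlled by the defining $S_0(M)$ condition $\lim_{r\to\infty}\sup_{x\text{ or }y\notin B(0,r)}\frac{|f(x)-f(y)|}{d(x,y)} = 0$. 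The remaining pairs have $d(x,y)$ bounded below and both points in a fixed ball $M_r$, and by compactness there are only finitely many net-pairs of this type across all levels once the nets are fixed. Thus for every $\eta>0$ only finitely many $e_i(f)$ exceed $\eta$, giving $Tf \in c_0$. I would therefore order the enumeration carefully so that both the ``small distance'' and ``large radius'' tails are simultaneously handled, making the two limiting conditions in Definition \ref{deflip} do precisely the work of pushing the sequence into $c_0$. The final check that the image $Z = T(S_0(M))$ is a genuine subspace and that the estimates assemble into a single $(1+\ep)$-isomorphism is then routine bookkeeping in the choice of the $\delta_k$ and the index ordering.
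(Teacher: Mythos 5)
There is a genuine gap, and it sits exactly at the step you yourself single out as the crux: membership of $Tf$ in $c_0(\N)$. With your construction this simply fails. Since $\delta_k\to 0$, the union of your nets is dense in every fixed ball $M_r$. Now fix any nonzero $f\in S_0(M)$ and a pair $(x_0,y_0)$ with $c:=|f(x_0)-f(y_0)|/d(x_0,y_0)>0$. For every large $k$ the level-$k$ net contains points within $\delta_k$ of $x_0$ and of $y_0$, so your index set contains infinitely many (pairwise distinct) net-pairs whose difference quotients converge to $c$; hence infinitely many coordinates of $Tf$ exceed $c/2$, and $Tf\notin c_0$ for every $f\neq 0$. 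Your claim that ``by compactness there are only finitely many net-pairs of this type across all levels once the nets are fixed'' is false precisely because the nets refine inside each fixed ball: compactness bounds the number of $\eta$-separated pairs in a \emph{single} finite net, not across the infinitely many refining nets, and neither the $lip_0$ condition (small distances) nor the $S_0$ condition (points escaping to infinity) controls pairs at moderate mutual distance inside a fixed ball, which occur cofinally in your enumeration. Nor can ``ordering the enumeration carefully'' repair this: whether a sequence lies in $c_0(\N)$ is invariant under any permutation of the index set, so no ordering helps once infinitely many coordinates are bounded away from zero.

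The known proof (Kalton, Theorem 6.6 in \cite{kalton2004} for compact $M$, generalized by Dalet, Lemma 3.9 in \cite{daletproper}, which is all the present paper invokes) fixes exactly this by \emph{restricting which pairs appear at which level}, not by enumeration order: at the stage with mesh $\delta_k$ one admits only net-pairs whose mutual distance lies in a prescribed band comparable to a scale $\theta_k\to 0$ (with $\delta_k$ much smaller than $\ep\,\theta_k$), and in the proper case the stages are organized by radius as well, so that the pairs introduced at the radius-$2^m$ stage have at least one point outside a smaller ball. Then each stage contributes only finitely many coordinates, and for a fixed $f\in S_0(M)$ the supremum of the coordinates over stage $(k,m)$ tends to $0$: in $k$ by the little-Lipschitz modulus at scale $\theta_k$, and in $m$ by the flatness-at-infinity modulus — this is what genuinely makes the two defining conditions of $S_0(M)$ do the work you wanted them to do. The two-sided $(1\pm\ep)$ estimate survives because any pair of points of $M$ falls into some distance band and some radius range, and is there approximated by a net-pair of the matching stage with small relative error. (Your remark about choosing nets ``fine enough relative to the little-Lipschitz modulus of the functions'' is also not admissible as stated — the operator must be fixed before $f$ is given — but with difference-quotient coordinates the upper bound $\|Tf\|_\infty\leq\|f\|_L$ is automatic, so the only real defect is the one above.)
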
 
Finally we need the following two results about (MAP) (see \cite{groth} and Theorem 1.e.15 in \cite{lintza1}).
\begin{thm}[Grothendieck] \label{grothendieck}
Let $X$ be a Banach space.
\begin{enumerate}[(G1).]
\item If $X^*$ has (MAP) then $X$ has (MAP).
\item If $X^*$ is separable and has (AP) then $X^*$ has (MAP). 
\end{enumerate}
\end{thm}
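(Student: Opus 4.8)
The plan is to reduce both statements to the dual (trace) description of the approximation property and then to exploit the duality between finite-rank operators on $X$ and weak$^*$-continuous finite-rank operators on $X^*$. First I would recall the standard reformulation: a Banach space $Z$ has the $\lambda$-(BAP) precisely when the identity $I_Z$ lies in the closure, for the topology $\tau_c$ of uniform convergence on compact subsets of $Z$, of the convex balanced set $\mathcal{A}_\lambda(Z)$ of finite-rank operators of norm at most $\lambda$; in particular (MAP) is the case $\lambda=1$. Since $\mathcal{A}_\lambda(Z)$ is convex, the Hahn--Banach theorem turns membership $I_Z\in\overline{\mathcal{A}_\lambda(Z)}^{\,\tau_c}$ into a family of scalar inequalities indexed by the $\tau_c$-continuous functionals on $\mathcal L(Z)$, each represented by a nuclear datum $w=\sum_n z_n\otimes z_n^*$ (with $z_n\in Z$, $z_n^*\in Z^*$, $\sum_n\|z_n\|\,\|z_n^*\|<\infty$) acting by $T\mapsto\sum_n\langle Tz_n,z_n^*\rangle$. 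Thus $Z$ has the $\lambda$-(BAP) if and only if
\[
\Big|\sum_n\langle z_n,z_n^*\rangle\Big|\;\le\;\lambda\sup_{T\in\mathcal{A}_1(Z)}\Big|\sum_n\langle Tz_n,z_n^*\rangle\Big|
\]
for every such datum. This is the form in which the dual hypotheses can be brought to bear.

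For (G1) I would assume $X^*$ has (MAP), that is, the displayed inequality holds on $X^*$ with $\lambda=1$, and take a nuclear datum $w=\sum_n x_n\otimes x_n^*$ on $X$ for which the inequality with $\lambda=1$ must be verified. The idea is to transport $w$ to the datum $w'=\sum_n x_n^*\otimes Jx_n$ on $X^*$, where $Jx_n\in X^{**}$ is the canonical image of $x_n$. Running the dual inequality for $X^*$ on $w'$ produces $|\sum_n x_n^*(x_n)|\le\sup_{S\in\mathcal{A}_1(X^*)}|\sum_n(Sx_n^*)(x_n)|$, and for weak$^*$-continuous $S=v^*$ with $v\in\mathcal{A}_1(X)$ one has $(v^*x_n^*)(x_n)=x_n^*(vx_n)$, so the right-hand supremum restricted to such $S$ is exactly the quantity we need for $X$. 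The one point needing care is that the supremum on $X^*$ ranges over all finite-rank contractions, whereas only the weak$^*$-continuous ones descend to operators on $X$; but the functionals $w'$ that arise have their $X^{**}$-component $Jx_n$ in fact lying in $X$, so $S\mapsto\sum_n\langle Sx_n^*,x_n\rangle$ is continuous for the point-weak$^*$ topology, and the principle of local reflexivity (equivalently a weak$^*$-density argument for weak$^*$-continuous operators) shows that restricting to weak$^*$-continuous contractions does not change the supremum. Since $\|v^*\|=\|v\|$, the constant $1$ is preserved, and $X$ inherits (MAP).

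For (G2) I would assume $X^*$ is separable with (AP) and upgrade (AP) to (MAP) on the dual. Here (AP) already yields $I_{X^*}\in\overline{\mathcal A(X^*)}^{\,\tau_c}$ with no norm control, equivalently the trace functional is well defined; the whole difficulty is to promote this to the sharp constant $\lambda=1$. The mechanism is special to dual spaces: using the weak$^*$-compactness of $B_{X^{**}}$ (Goldstine) together with a convexity (Mazur) argument, one first replaces the approximating finite-rank operators by weak$^*$-continuous ones, and then drives their norms down to $1$, because a nuclear operator on a dual space admits representations adapted to the weak$^*$ topology (coefficients in $X$ rather than $X^{**}$) for which the relevant integral norm collapses to the operator norm appearing on the right-hand side above. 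Separability of $X^*$ is what lets me replace nets by sequences throughout and extract, by a diagonal/Cesàro procedure, a single sequence of weak$^*$-continuous finite-rank contractions converging to $I_{X^*}$ uniformly on compacta, which is exactly (MAP).

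The hard part is the metric control, the constant $\lambda=1$, and it sits squarely in (G2): the passage from (AP) to (MAP) genuinely uses that $X^*$ is a dual space, since (AP) implies no bounded approximation property for general Banach spaces. Concretely, the delicate step is arranging that the approximating operators on the dual be simultaneously weak$^*$-continuous and of norm arbitrarily close to $1$; this is where the weak$^*$-topology, Goldstine's theorem and the convexity of the operator sets must be combined, and it is the heart of Grothendieck's argument. By comparison, the transfer in (G1) is comparatively formal once the adjoint correspondence and local reflexivity are in hand.
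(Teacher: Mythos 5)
The paper does not actually prove this statement: it is quoted as a classical theorem, with references to Grothendieck's memoir \cite{groth} and to Theorem 1.e.15 in \cite{lintza1}, so your attempt can only be measured against those classical arguments. Your part (G1) is essentially the standard proof and is sound. Grothendieck's identification of the dual of $(\mathcal{L}(Z),\tau_c)$ with the nuclear data $w=\sum_n z_n\otimes z_n^*$, together with Hahn--Banach applied to the convex balanced set $\mathcal A_\lambda(Z)$, does reduce the $\lambda$-(BAP) to the trace inequality you display; and for data on $X^*$ whose second coordinates lie in $X$ (rather than $X^{**}$), the supremum over all finite-rank contractions $S$ on $X^*$ coincides with the supremum over adjoints $v^*$ of finite-rank contractions $v$ on $X$ --- correctly justified by applying local reflexivity to $S^*|_X\colon X\to X^{**}$, the $(1+\varepsilon)$ losses disappearing in the limit so that the constant $1$ survives. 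This is exactly the textbook transfer.

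In (G2), however, the step you yourself flag as the heart of the matter is asserted rather than proved, and the tools you name would not prove it. Mazur's theorem, Goldstine's theorem and a diagonal/Ces\`aro extraction cannot by themselves ``drive the norms down to $1$'': nothing in that outline uses separability beyond replacing nets by sequences, yet (AP) of a dual space is not known to yield (MAP) by such soft convexity arguments --- in the classical proof separability enters as a Radon--Nikod\'ym-type hypothesis, not a convenience. Concretely, for $Z=X^*$ one must verify $\bigl|\sum_n\langle z_n,z_n^{**}\rangle\bigr|\le\sup\bigl\{\bigl|\sum_n\langle Tz_n,z_n^{**}\rangle\bigr| : T \text{ finite rank on } X^*,\ \|T\|\le 1\bigr\}$ with $z_n\in X^*$, $z_n^{**}\in X^{**}$. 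Replacing the coefficients $z_n^{**}$ by weak$^*$-approximants $x_{n,k}\in X$ (Goldstine plus weak$^*$-metrizability of $B_{X^{**}}$, which is where separability of $X^*$ gives sequences) does yield $\sum_n\langle z_n,x_{n,k}\rangle\to\sum_n\langle z_n,z_n^{**}\rangle$ by domination, but the bound on $\bigl|\sum_n\langle z_n,x_{n,k}\rangle\bigr|$ by the corresponding supremum is again the (MAP)-type inequality, now for $X$-coefficient data, so the scheme as sketched is circular. What closes the loop in Grothendieck's and Lindenstrauss--Tzafriri's argument is a genuinely measure-theoretic lemma: an integral operator with values in a separable dual is nuclear, with the nuclear norm controlled by the integral norm (a Radon--Nikod\'ym/vector-measure argument), combined with the fact that (AP) of $X^*$ makes the trace well defined and dominated by the nuclear norm. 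This also shows your phrase ``the integral norm collapses to the operator norm'' is backwards: what is needed is that the \emph{nuclear} norm of the datum collapses to the \emph{integral-type} norm $\sup_{\|T\|\le 1}|w(T)|$ appearing on the right-hand side, with the trace dominated by it. Without that lemma, your (G2) has a genuine gap at precisely the point you identify as delicate.
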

We are ready to prove Theorem \ref{propo}. Let us consider a metric space $M$ satisfying the assumptions of Theorem \ref{propo} and let us take $\ep >0$ arbitrary. Fix $\ep'$ such that $(1 + \ep')^3 < 1 +\ep $. According to Lemma \ref{lemdalet}, there exists a subspace $Z$ of $c_0(\N)$ such that $S_0(M)$ is $(1 + \ep')$-isomorphic to $Z$. Then note that $Z$ also has the metric approximation property. Indeed $Z^*$ is $(1 + \ep')$-isomorphic to $\F(M)$, so $Z^*$ has the $(1 + \ep')$-bounded approximation property. Next, using $(G2)$ of Theorem \ref{grothendieck} we get that $Z^*$ has MAP. Then using $(G1)$ of Theorem \ref{grothendieck} we get that $Z$ also has MAP. Thus we can apply Lemma \ref{lemgodkalli} to $Z$ so that there exists a sequence $(F_n)_n$ of finite-dimensional subspaces of $Z^*$ such that $Z^*$ is $(1+\ep')$-isomorphic to a subspace $F$ of $\left( \sum\disp{\oplus_n F_n } \right)_{\ell_1}$. Now $\F(M)$ is $(1 + \ep')$-isomorphic to $Z^*$ so there exists a sequence $(E_n)_n$ of finite-dimensional subspaces of $\F(M)$ such that $\left( \sum\disp{\oplus_n E_n } \right)_{\ell_1}$ is $(1+\ep')$-isomorphic to $\left( \sum\disp{\oplus_n F_n } \right)_{\ell_1}$. Then there exists a subspace $E$ of $\left( \sum\disp{\oplus_n E_n } \right)_{\ell_1}$ which is $(1+\ep')$-isomorphic to $F$. It is easy to check that $\F(M)$ is $(1 + \ep')^3$-isomorphic to $E$. This completes the proof.
\end{proof}
We now give some examples where Theorem \ref{propo} applies.

\begin{exem} \label{example}
The space $S_0(M)$ separates points uniformly and $\F(M)$ has (MAP) in any of the following cases.
\begin{enumerate}[1.]
\item $M$ proper countable metric space (Theorem 2.1 and Theorem 2.6 in \cite{daletproper}).
\item $M$ proper ultrametric metric space (Theorem 3.5 and Theorem 3.8 in \cite{daletproper}).
\item $M$ compact metric space such that there exists a sequence $(\ep_n )_n$
tending to $0$, a real number $\rho < 1/2$ and finite $\ep_n$-separated subsets $N_n$ of $M$ which are $\rho \ep_n$-dense in $M$ (Proposition 6 in \cite{godozawa}). For instance the middle-third Cantor set.
\end{enumerate}
\end{exem}
Before ending this section, we state a Proposition which says that Theorem \ref{propo} is optimal in some sense. Indeed, in our following example the dimension of the finite dimensional space $E_n$ in Theorem \ref{propo} has to go to infinity when $n$ goes to infinity.
\begin{prop}
There exists a countable compact metric space $K$, made of a convergent sequence and its limit, such that $\F(K)$ fails to have a cotype. In particular, $K$ satisfies the assumptions of Theorem \ref{propo} but does not embed isomorphically into $\ell_1$. 
\end{prop}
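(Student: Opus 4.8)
The plan is to reduce the statement to a cotype obstruction and then to build $K$ by gluing together finite pieces. First recall two standard facts: the space $\ell_1$ has cotype $2$, and having finite cotype passes to subspaces and is stable under isomorphism. Consequently, if $\F(K)$ has no finite cotype then it cannot be isomorphic to a subspace of $\ell_1$, which gives the ``in particular'' part. Moreover, a countable compact metric space is proper and countable, so by item $1$ of Example~\ref{example} it automatically satisfies the hypotheses of Theorem~\ref{propo}. Thus everything reduces to producing a countable compact metric space $K$ -- homeomorphic to a convergent sequence together with its limit -- for which $\F(K)$ has no finite cotype. By the Maurey--Pisier theorem this is equivalent to $\F(K)$ containing the spaces $\ell_\infty^m$ uniformly, i.e. for each $m$ a copy of $\ell_\infty^m$ that is $C$-isomorphic to a subspace of $\F(K)$, with $C$ independent of $m$.

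The construction I would use is a ``cluster'' one. Suppose we are given a sequence of finite metric spaces $(K_m)_m$ such that $\F(K_m)$ contains a subspace $C$-isomorphic to $\ell_\infty^{m}$, with $C$ independent of $m$. Rescale each $K_m$ so that its diameter is at most $4^{-m}$ (Lipschitz-free spaces are invariant under rescaling of the metric, so this does not affect the isomorphic copy of $\ell_\infty^m$ inside $\F(K_m)$), fix a base point $0$, and place the rescaled clusters around $0$ at ``heights'' $d(x,0)\approx 2^{-m}$ for $x\in K_m$, defining the distance between points of distinct clusters by passing through $0$. The resulting space $K=\{0\}\cup\bigsqcup_m K_m$ is a metric space in which every point except $0$ is isolated and the clusters shrink to $0$; hence $K$ is compact, countable, and homeomorphic to a convergent sequence together with its limit. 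Since for any subset $A\subseteq K$ containing the base point the restriction map $Lip_0(K)\to Lip_0(A)$ is a metric surjection (McShane extension), $\F(A)$ embeds isometrically into $\F(K)$; applying this to $A=K_m\cup\{0\}$, and noting that adjoining the far away point $0$ does not change the free norm of measures supported in $K_m$, we obtain isometric copies of each $\F(K_m)$ inside $\F(K)$. In particular $\F(K)$ contains $\ell_\infty^m$ uniformly and therefore has no finite cotype.

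The heart of the matter -- and the step I expect to be the main obstacle -- is thus the production of finite metric spaces $(K_m)_m$ whose free spaces contain $\ell_\infty^m$ uniformly. This cannot be done with ``tree-like'' spaces: by Godard's theorem the free space over a subset of an $\R$-tree embeds isometrically into $L_1$, which has cotype $2$, so one is forced to use genuinely non-$L_1$ geometry. The route I would take is to fix a single (not necessarily compact) metric space $X$ whose free space $\F(X)$ is already known to lack finite cotype -- equivalently, contains $\ell_\infty^m$ uniformly -- and to take $K_m$ to be suitable finite nets of $X$. Since an increasing sequence of finite nets of $X$ has free spaces that isometrically exhaust a dense subspace of $\F(X)$, the principle of small perturbations shows that any fixed copy of $\ell_\infty^m$ inside $\F(X)$ is, up to an arbitrarily small loss, already contained in $\F(K_m)$ for $m$ large, which delivers the required uniform family. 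The delicate point, where the real content lies, is exactly the existence of such an $X$ (or directly of the $K_m$), i.e. exhibiting Lipschitz-free spaces that fail every cotype; once this input is available, the gluing and compactness arguments above are routine.

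Combining the reductions yields a countable compact $K$, a convergent sequence with its limit, such that $\F(K)$ has no cotype; in particular $\F(K)$ does not embed isomorphically into $\ell_1$, while $K$ meets the hypotheses of Theorem~\ref{propo}. This also gives the announced optimality: were the dimensions of the spaces $E_n$ in Theorem~\ref{propo} bounded, then each $E_n$ would be uniformly isomorphic to a subspace of some $\ell_1^{k_n}$ (via John's theorem and the isometric embedding of $\ell_2$ into $L_1$), so the $\ell_1$-sum, hence $\F(K)$, would embed into $\ell_1$, contradicting the absence of cotype. Therefore the dimensions of the $E_n$ must tend to infinity.
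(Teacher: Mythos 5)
Your reductions and your gluing construction are sound and essentially identical to the paper's: the paper also rescales finite pieces $M_n$ to diameter at most $2^{-(n+1)}$, places them at distance $2^{-n}$ from a distinguished point $e$, and makes inter-cluster distances pass through $e$, obtaining a compact countable $K$ that is a convergent sequence with its limit; it then gets the free spaces of the pieces inside $\F(K)$ (the paper does this via the isometric identification $\F(K)\equiv\left(\sum \F(M_n\cup\{e\})\right)_{\ell_1}$ from Kaufmann's work, whereas you use the isometric embedding $\F(A)\hookrightarrow\F(K)$ for pointed subsets $A\subseteq K$; both work). The ``in particular'' part is handled the same way in both arguments.

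However, there is a genuine gap, and you flag it yourself: you never exhibit the finite metric spaces $K_m$ (equivalently, a metric space $X$ with $\F(X)$ failing every cotype) on which the whole construction feeds. Saying ``once this input is available, the rest is routine'' leaves unproved precisely the substantive content of the proposition --- a priori it is conceivable that free spaces over finite metric spaces all embed into $L_1$ with uniform constant (as indeed happens for subsets of trees, by Godard's theorem, which you correctly note rules out tree-like constructions). The paper closes this gap in one line: take $X=c_0$. By the Godefroy--Kalton lifting theorem (Theorem 3.1 in \cite{godkal}), every separable Banach space embeds linearly isometrically into its own Lipschitz-free space, so $\F(c_0)$ contains an isometric copy of $c_0$ and hence has no nontrivial cotype. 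One then picks, for each $n$, elements $\gamma_1^n,\dots,\gamma_{k_n}^n\in\F(c_0)$ witnessing the failure of cotype $n$ with constant $n$, approximates them by finitely supported elements $\mu_i^n$ (so that the cotype-$n$ inequality still fails with constant $n/2$), and defines $M_n$ to be the union of the supports of the $\mu_i^n$ together with the origin. This is exactly your ``finite subsets plus small perturbation'' step, carried out on a concrete space. So the single missing idea is the choice of $c_0$ combined with the Godefroy--Kalton lifting theorem; with that addition your argument would be complete and would coincide with the paper's proof.
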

\begin{proof}
It is well known that $c_0$ has no nontrivial cotype. Since $c_0$ is separable, by Godefroy-Kalton lifting theorem (Theorem 3.1 in \cite{godkal}), there is a subspace of $\F(c_0)$ which is linearly isometric to $c_0$. Thus $\F(c_0)$ has no nontrivial cotype. So for every $n\geq1$, there exist $\gamma_{1}^n,\cdots,\gamma_{k_n}^n \in \F(c_0)$ such that 
$$\disp \left( \sum_{i=1}^{k_n} \|\gamma_{i}^n\|^n \right)^{\frac{1}{n}}> n \left(\E \| \sum_{i=1}^{k_n} \ep_i \gamma_{i}^n \|^n \right)^{\frac{1}{n}}, $$
where $(\ep_i)_{i=1}^{k_n}$ is an independent sequence of Rademacher random variables. Next we approximate each $\gamma_{i}^n$ by a finitely supported element $\mu_{i}^n \in\F(c_0)$ such that
$$\disp \left( \sum_{i=1}^{k_n} \|\mu_{i}^n\|^n \right)^{\frac{1}{n}}> \frac{n}{2} \left(\E \| \sum_{i=1}^{k_n} \ep_i \mu_{i}^n \|^n \right)^{\frac{1}{n}}. $$
Then we define $M_n = (\cup_{i=1}^{k_n}\, \mbox{supp}(\mu_{i}^n))\cup \{0\} \subset c_0$ which is a finite pointed metric space. Since scaling a metric space does not affect the linear isometric structure of the corresponding Lipschitz-free space, we may and do assume that the diameter of $M_n$ is less than $\frac{1}{2^{n+1}}$. 

Now we construct the desired compact pointed metric space as follows. Let us define the countable set $K:= (\cup_{n\geq 2} \{n\} \times M_n) \cup \{e\}$, $e$ being the distinguished point of $M$. To simplify the notation we write $M_n'$ for $\{n\} \times M_n$. Then we define a metric $d$ on $K$ such that $d(x,e)=\frac{1}{2^n}$ if $x\in M_n'$, $d(x,y)=d_{M_n}(x_n,y_n)$ if $x=(n,x_n), y=(n,y_n)\in M_n'$ and $d(x,y)=d(x,e)+d(y,e)$ if $x\in M_n'$, $y\in M_m'$ with $n \neq m$. Of course, with this metric $K$ is compact since it is a convergent sequence. Moreover the fact that $d(x,y)=d(x,e)+d(y,e)$ for $x\in M_n'$, $y\in M_m'$ with $n \neq m$ readily implies that $\F(K) \equiv (\sum \F(M_n \cup \{ e \}))_{\ell_1}$ (see Proposition 5.1 in \cite{kaufmann} for instance). By construction of $M_n$, $(\sum \F(M_n))_{\ell_1}$ has no nontrivial cotype. Thus $\F(K)$ also has no cotype. Therefore, $\F(K)$ cannot embed into $\ell_1$. But since it is a compact countable metric space, $lip_0(K)$ separates points uniformly and $\F(K)$ has (MAP) (Theorem 2.1 and 2.6 in \cite{daletproper}).
\end{proof}

\subsection{Quantitative versions of the Schur property} \label{section24}

It is well known that the Schur property is equivalent to the following condition: for every $\delta >0$, every $\delta$-separated sequence $(x_n)_{n \in \N}$ in the unit ball of $X$ contains a subsequence that is equivalent to the unit vector basis of $\ell_1$. That is there exists a constant $K>0$ (which may depend on the sequence considered) and a subsequence $(x_{n_k})_{k \in \N}$  such that
\begin{eqnarray*}
\sum_{i=1}^n |a_i| \geq \left \|\sum_{i=1}^n a_i x_{n_i} \right \| \geq \frac{1}{K} \sum_{i=1}^n |a_i| \, , \mbox{ for every }(a_i)_{i=1}^n \in \R^n.
\end{eqnarray*} 
In this case, we say that the sequence $(x_{n_k})_{k \in \N}$ is $K$-equivalent to the unit vector basis of $\ell_1$. This equivalence can be easily deduced using Rosenthal's $\ell_1$ theorem (see \cite{wnuk}). This last fact leads us to define the following quantitative version of the Schur property which has been introduced for the first time by Johnson and Odell in \cite{johnodell}. We also refer to \cite{godkaltonli} for the $1$-strong Schur property.

\begin{defi}
Let $X$ be a Banach space. We say that $X$ has \textit{the strong Schur property} if there exists a constant $K>0$ such that, for every $\delta >0$, any $\delta$-separated sequence $(x_n)_{n \in \N}$ in the unit ball of $X$ contains a subsequence that is $\frac{K}{\delta}$-equivalent to the unit vector basis of $\ell_1$. If in this definition, $K$ can be chosen so that $K=2+\ep$ for every $\ep >0$, then we say that $X$ has the \textit{$1$-strong Schur property}. 
\end{defi}
It is clear with the above characterization of the Schur property that the strong Schur property implies the Schur property. It is known that the Schur property is strictly weaker than the strong Schur property (see \cite{wnuk} or \cite{kalendaschur} for example). We refer the reader to \cite{kaltonc0extension} (Proposition $2.1$) for some equivalent formulations of the strong Schur property.
\begin{exem}
~
\begin{enumerate}[1.]
\item In \cite{knaustodell} (Proposition 4.1), Knaust and Odell proved that if $X$ has the property $(S)$ and does not contain any isomorphic copy of $\ell_1$, then $X^*$ has the strong Schur property. In particular $\ell_1$ and all its subspaces have the strong Schur property. A Banach space has the property $(S)$ if every normalized weakly null sequence contains a subsequence equivalent to the unit vector basis of $c_0$. This is known to be equivalent to the hereditary Dunford-Pettis property (Proposition 2 in \cite{cembranos})

\item In \cite{godkaltonli} (Lemma 3.4), Godefroy, Kalton and Li proved that a subspace of $L_1$ has the strong Schur property if and only if its unit ball is relatively compact in the topology of convergence in measure.
\end{enumerate}
\end{exem} 

We now give the second quantitative version of the Schur property which has been introduced  more recently by Kalenda and Spurn\'y in \cite{kalendaschur}.	

\begin{defi}
Let X be a Banach space, and let $(x_n)_{n \in \N}$ be a bounded sequence in $X$. We write $\mbox{clust}_{X^{**}}(x_n)$ for the set of all $weak^*$ cluster points of $(x_n)_{n \in \N}$ in $X^{**}$. Then we define the two following moduli:
\begin{eqnarray*}
\delta(x_n) &:=& \mbox{diam}\{ \mbox{clust}_{X^{**}}(x_n) \} \\
\mbox{ca}(x_n) &:=& \disp{\inf_{n \in \N}} \mbox{diam}\{ x_k \, ; \, k \geq n  \}.
\end{eqnarray*}
The first moduli measures how far is the sequence from being weakly Cauchy and the second one measures how far is the sequence from being $\|\cdot\|$-Cauchy. We then say that $X$ has \textit{the $C$-Schur property} if for every bounded sequence $(x_n)_{n \in \N}$ in $X$: $\mbox{ca}(x_n) \leq C \, \delta(x_n)$
\end{defi}

In \cite{kalendaschur} the authors proved that the $1$-Schur property implies the $1$-strong Schur property, and that the $1$-strong Schur property implies the $5$-Schur property. To the best of our knowledge, the question whether the $1$-strong Schur property implies the $1$-Schur property is open. 

In \cite{kalendaschurdunford} it is proved (Theorem $1.1$) that if $X$ is a subspace of $c_0(\Gamma)$, then $X^*$ has the $1$-Schur property. So we easily deduce the following proposition.

\begin{prop} \label{propo2}
Let $M$ be a proper metric space such that $S_0(M)$ separates points uniformly. Then $\F(M)$ has the $1$-Schur property.
\end{prop}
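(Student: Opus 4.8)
The plan is to chain together two facts already at our disposal: Dalet's embedding lemma (Lemma \ref{lemdalet}) and the cited result from \cite{kalendaschurdunford} stating that if $X$ is a subspace of some $c_0(\Gamma)$, then $X^*$ has the $1$-Schur property. The strategy mirrors exactly the opening moves of the proof of Theorem \ref{propo}, so I would keep the argument short and leverage that machinery.

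First I would fix $\ep > 0$ arbitrary and apply Lemma \ref{lemdalet} to the proper metric space $M$: since $S_0(M)$ separates points uniformly, $M$ is proper, and Dalet's lemma produces a subspace $Z$ of $c_0(\N)$ such that $S_0(M)$ is $(1+\ep)$-isomorphic to $Z$. The key structural input is Theorem \ref{propdalet} (Dalet), which tells us that $S_0(M)$ is an isometric predual of $\F(M)$; consequently $Z^*$ is $(1+\ep)$-isomorphic to $\F(M)$. Now $Z$ is a subspace of $c_0(\N) \subseteq c_0(\Gamma)$, so the cited Theorem $1.1$ of \cite{kalendaschurdunford} applies directly to $Z$ and yields that $Z^*$ has the $1$-Schur property.

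The remaining step is to transfer the $1$-Schur property from $Z^*$ to $\F(M)$ through the $(1+\ep)$-isomorphism, and to let $\ep \to 0$. Here I would observe that the moduli $\delta(\cdot)$ and $\mathrm{ca}(\cdot)$ entering the definition of the $C$-Schur property behave controllably under isomorphisms: a $(1+\ep)$-isomorphism distorts each of $\delta(x_n)$ and $\mathrm{ca}(x_n)$ by at most a factor depending on $\ep$, so if $Z^*$ satisfies $\mathrm{ca} \le \delta$, then $\F(M)$ satisfies $\mathrm{ca} \le (1+\ep)^2 \,\delta$ (or a comparable estimate). Since $\ep > 0$ was arbitrary, taking the infimum over $\ep$ forces the constant down to $1$, giving the $1$-Schur property for $\F(M)$ exactly.

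The main obstacle, and the only place requiring genuine care, is this last transfer: one must verify that the $1$-Schur property genuinely passes through $(1+\ep)$-isomorphisms with the sharp constant, i.e. that the estimate $\mathrm{ca} \le C\,\delta$ survives with $C \to 1$ as $\ep \to 0$. This requires checking how a linear isomorphism $T$ with $\|T\|\,\|T^{-1}\| \le 1+\ep$ acts on $\mathrm{weak}^*$ cluster points in the biduals (so that $\delta$ transforms correctly under the adjoint $T^{**}$) and on diameters of tails (for $\mathrm{ca}$). Since everything is controlled by the isomorphism constant and we may choose $\ep$ as small as we please \emph{before} testing any given bounded sequence, no single sequence can witness a constant strictly larger than $1$, and the conclusion follows.
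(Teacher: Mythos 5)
Your proof is correct and follows essentially the same route as the paper's: Dalet's embedding lemma (Lemma \ref{lemdalet}), Theorem \ref{propdalet} to identify $\F(M)$ with $S_0(M)^*$, the cited result on duals of subspaces of $c_0(\Gamma)$, and a passage to the limit $\ep \to 0$. Your additional care in verifying that the moduli $\delta$ and $\mathrm{ca}$ transfer through a $(1+\ep)$-isomorphism (via weak$^*$-to-weak$^*$ continuity of the bidual operator) is precisely what the paper leaves implicit in the step ``therefore $\F(M)$ has the $(1+\ep)$-Schur property.''
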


\begin{proof}
Fix $\ep >0$. We use Lemma \ref{lemdalet} to find a subspace $Z$ of $c_0(\N)$ which is $(1 + \ep)$-isomorphic to $S_0(M)$. Now, according to Theorem $1.1$ in \cite{kalendaschurdunford}, $Z^*$ has the $1$-Schur property. Since $S_0(M)$ separates points uniformly, we have that $S_0(M)^*\equiv\F(M)$ (Theorem \ref{propdalet}). Thus $\F(M)$ is $(1 + \ep)$-isomorphic to $Z^*$. Therefore $\F(M)$ has the $(1+\ep)$-Schur property. Since $\ep$ is arbitrary, $\F(M)$ has the $1$-Schur property.
\end{proof}

\begin{exem}
It is clear that metric spaces of the Examples \ref{example} satisfy the assumptions of Proposition \ref{propo2}. Moreover, the following family of examples also satisfies those last assumptions (see Proposition 2.5 in \cite{vectorvalued}). Let $M$ be a proper metric space and $\omega$ be a nontrivial gauge. Then $(M, \omega \circ d)$ is a proper metric space such that $S_0(M,\omega \circ d)$ separates points uniformly. Now this result leads us to wonder if $\F(M, \omega \circ d)$ has MAP for $M$ and $\omega$ as above (see Question \ref{qu2}).

\end{exem}

\section{Lipschitz-free spaces over metric spaces originating from p-Banach spaces}\label{section3}

In this section we study Lipschitz-free spaces over a new family of metric spaces, namely metric spaces originating from p-Banach spaces. We prove that results of the previous Section \ref{section2} can be applied to some spaces in this new family of examples. 

Let $X$ be a real vector space and $p$ in $(0,1)$. We say that a map $N$: $X \to [0,\infty)$ is $p$-subadditive if $N(x+y)^p \leq N(x)^p + N(y)^p$ for every $x$ and $y$ in $X$. Then a homogeneous and $p$-subadditive map $\|\cdot\|$: $X \to [0,\infty)$ is called a $p$-norm if $\|x\|=0$ if and only if $x=0$. Moreover the map $(x,y)\in X^2 \mapsto \|x-y\|^p$ defines a metric on $X$. If $X$ is complete for this metrizable topology, we say that $X$ is a $p$-Banach space. Note that a $p$-norm is actually a quasi-norm. That is there is a constant $C\geq 1$ such that for every $x,y \in X$: $\|x+y\| \leq C(\|x\|+\|y\|)$. Moreover an important theorem of Aoki and Rolewicz implies that every quasi-normed space can be renormed to be a $p$-normed space for some $p$ in $(0,1)$. For background on quasi-Banach spaces and $p$-Banach spaces we refer the reader to \cites{kaltonquasibanach,kaltonpeck}.

We fix $p$ in $(0,1)$ and consider $(X,\|\cdot\|)$ a $p$-Banach space. We denote $M_p=(X,d_p)$ the metric space where the metric is the $p$-norm of $X$ to the power $p$: $d_p(x , y )= \|x - y\|^p$ . Now $M_p$ being a metric space, we can study its Lipschitz-free space. At this point we would like to mention the paper \cite{albiackaltonquasi} of Albiac and Kalton in which they define and study Lipschitz-free spaces over $p$-Banach spaces (and not over metric spaces originating from $p$-Banach spaces, as we do here). They show among other things that the analogue of Godefroy-Kalton lifting theorem (Theorem 3.1 in \cite{godkal}) is false for $p$-Banach spaces.

As mentioned before, we know that $lip_0(M,\omega \circ d)$ is always $1$-norming when $\omega$ is a nontrivial gauge (Proposition 3.5 in \cite{kalton2004}). But in our case we consider a quasi-norm composed with the nontrivial gauge $\omega(t)=t^p$. Thus, we can expect to have the same result. We will see that we need to use more arguments to overpass the difference between a norm and a $p$-norm, that is the absence of the triangle inequality for the $p$-norm. However, techniques that are employed here are inspired by Kalton's ideas in \cite{kalton2004}.

As usual, $\|\cdot\|_1$ denotes the $\ell_1$-norm on $\R^n$. We also denote $\|\cdot\|_p$ the $p$-norm on $\R^n$ defined by $\|x\|_p= (\sum_{i=1}^n |x_i|^p)^{\frac{1}{p}}$ for every $x=(x_i)_{i=1}^n \in \R^n$. We begin with a very basic lemma. 
\begin{lem}\label{lemma2}
Let $p \in (0,1)$ and $n \in \mathbb N$. Then we have the following inequalities $$\forall x \in \mathbb R^n,\, \|x\|_1 \leq \|x\|_p \leq n^{\frac{1-p}{p}}\|x\|_1.$$ 
\end{lem}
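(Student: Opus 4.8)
The plan is to prove the two inequalities separately, reducing each to an elementary property of the function $t \mapsto t^p$ on $[0,\infty)$, which for $p \in (0,1)$ is concave and vanishes at $0$. A convenient first move is to raise everything to the power $p$: the inequality $\|x\|_1 \leq \|x\|_p$ is equivalent to $\|x\|_1^p \leq \|x\|_p^p = \sum_{i=1}^n |x_i|^p$, and the right-hand inequality is equivalent to $\sum_{i=1}^n |x_i|^p \leq n^{1-p}\|x\|_1^p$. This clears away the outer $1/p$ exponents and leaves two clean estimates in the quantities $|x_i|$. Throughout I would treat $x \neq 0$, the case $x = 0$ being trivial.

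For the left inequality I would argue by normalization together with the elementary bound $t^p \geq t$ for $t \in [0,1]$. Setting $a_i := |x_i|/\|x\|_1$, one has $\sum_i a_i = 1$ and each $a_i \in [0,1]$, hence $a_i^p \geq a_i$, so that $\sum_i a_i^p \geq \sum_i a_i = 1$. Multiplying through by $\|x\|_1^p$ gives $\sum_i |x_i|^p \geq \|x\|_1^p$, which is exactly the desired inequality. Equivalently, this is the $p$-subadditivity $\bigl(\sum_i |x_i|\bigr)^p \leq \sum_i |x_i|^p$, which also follows by induction from the two-variable estimate $(a+b)^p \leq a^p + b^p$.

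For the right inequality I would invoke Jensen's inequality for the concave function $\phi(t) = t^p$ with the uniform weights $1/n$, namely $\frac{1}{n}\sum_i |x_i|^p \leq \bigl(\frac{1}{n}\sum_i |x_i|\bigr)^p$; multiplying by $n$ yields $\sum_i |x_i|^p \leq n^{1-p}\|x\|_1^p$, as required. Alternatively, one can apply Hölder's inequality with conjugate exponents $1/p$ and $1/(1-p)$ to the vectors $(|x_i|^p)_i$ and $(1)_i$, which produces the same bound.

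Neither step presents a genuine obstacle, since the statement is elementary; the only point demanding a little care is bookkeeping the exponent in the right-hand inequality, so that after taking $p$-th roots the constant comes out exactly as $n^{(1-p)/p}$. A careless use of Jensen or Hölder easily misplaces the power of $n$, so I would verify the conjugacy relation $p + (1-p) = 1$ explicitly before extracting the final factor.
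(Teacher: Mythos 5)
Your proof is correct and follows essentially the same route as the paper, which simply declares the left inequality obvious and obtains the right one from H\"older's inequality (your Jensen argument with uniform weights is the same estimate in disguise, and you note the H\"older variant yourself). You merely supply the elementary details the paper omits, and the constants come out right.
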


\begin{proof}
The inequality $\|x\|_1 \leq \|x\|_p$ is obvious and a simple application of H{\"o}lder's inequality gives $\|x\|_p \leq n^{\frac{1-p}{p}}\|x\|_1$. 
\end{proof}

From now on we write $M_p^n$ for $(\R^n,d_p)=(\R^n,\|\cdot\|_p^p)$. In order to prove our first result about the structure of $\F(M_p^n)$, we need the following technical lemma.

\begin{lem} \label{lemphi}
Let $R\in (0,\infty)$, $p \in (0,1)$ and $n \in \N$. Then, there exists a Lipschitz function $\varphi$: $M_p^n \to M_p^n$ such that $\varphi$ is the identity map on $\overline{B}(0,R)$, is null on $M_p^n \backslash B(0,2R)$ and $\varphi$ is $n^{2-p}$-Lipschitz.
\end{lem}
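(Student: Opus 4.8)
The first point to settle is the geometry of the two balls involved. In the metric $d_p$ we have $\overline{B}(0,R)=\{x\in\R^n:\|x\|_p^p\le R\}$ and $M_p^n\setminus B(0,2R)=\{x:\|x\|_p^p\ge 2R\}$; call these sets $A$ and $B$. The tempting candidate, a radial cut-off $\varphi(x)=\theta(\|x\|_p^p)\,x$ with $\theta$ dropping from $1$ to $0$, in fact \emph{fails} to be $d_p$-Lipschitz: because $\|\cdot\|_p$ satisfies no triangle inequality, perturbing $x$ slightly in one coordinate already changes $\theta$ enough to move the other (possibly large) coordinates, and in the snowflaked metric $d_p$ such a displacement is far too expensive near the coordinate hyperplanes. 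This is the crux of the difficulty, and it is what forces the map to be manufactured in the genuine norm $\|\cdot\|_1$, where Lipschitz extension is available, and the estimate then transferred to $d_p$ through Lemma \ref{lemma2}.

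Accordingly, the plan is to build $\varphi$ coordinate by coordinate. For each $j\in\{1,\dots,n\}$ I would define on the closed set $A\cup B$ the datum $g_j=x_j$ on $A$ and $g_j=0$ on $B$, and then take $\varphi_j\colon(\R^n,\|\cdot\|_1)\to\R$ to be a McShane extension of $g_j$ with the same Lipschitz constant. Setting $\varphi=(\varphi_1,\dots,\varphi_n)$ automatically yields $\varphi=\mathrm{id}$ on $A$ and $\varphi=0$ on $B$, so only the Lipschitz estimate remains to be controlled.

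The quantitative heart of the argument is a lower bound for the $\ell_1$-separation of $A$ and $B$. For $a\in A$ and $b\in B$, the $p$-subadditivity of $\|\cdot\|_p$ gives $\|b\|_p^p\le\|a\|_p^p+\|a-b\|_p^p$, hence $\|a-b\|_p^p\ge 2R-R=R$ and $\|a-b\|_p\ge R^{1/p}$; Lemma \ref{lemma2} then upgrades this to $\|a-b\|_1\ge R^{1/p}\,n^{-(1-p)/p}$. Since $|a_j|\le\|a\|_p\le R^{1/p}$ for $a\in A$, this shows that $g_j$ is $n^{(1-p)/p}$-Lipschitz on $A\cup B$ with respect to $\|\cdot\|_1$ (the only nontrivial pairs being $a\in A$, $b\in B$, where $|a_j|\le R^{1/p}\le n^{(1-p)/p}\|a-b\|_1$), so the extension $\varphi_j$ may be chosen $n^{(1-p)/p}$-Lipschitz for $\|\cdot\|_1$ as well.

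Finally I would read off the $d_p$-Lipschitz constant. Using $\|z\|_1\le\|z\|_p$ from Lemma \ref{lemma2} to pass back from $\|\cdot\|_1$ to $\|\cdot\|_p$, each coordinate satisfies $|\varphi_j(x)-\varphi_j(y)|\le n^{(1-p)/p}\|x-y\|_p$, and therefore
\[
 d_p(\varphi(x),\varphi(y))=\sum_{j=1}^{n}|\varphi_j(x)-\varphi_j(y)|^p\le n\cdot\big(n^{(1-p)/p}\big)^p\,\|x-y\|_p^p=n^{2-p}\,d_p(x,y).
\]
The exponent $2-p$ thus appears transparently as a factor $n$ (the sum over the $n$ coordinates) times a factor $n^{1-p}$ (coming from the two invocations of Lemma \ref{lemma2}). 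I expect the genuine obstacle to be precisely the separation estimate together with the realization that the cut-off must be produced in $\|\cdot\|_1$ rather than radially; once these are in place the remaining steps are routine.
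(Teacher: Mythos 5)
Your proof is correct and follows essentially the same route as the paper: define the map to be the identity on $\overline{B}(0,R)$ and zero off $B(0,2R)$, extend it coordinate by coordinate via McShane in the $\|\cdot\|_1$ norm, and transfer the estimates through Lemma \ref{lemma2} twice to obtain the constant $n\cdot n^{1-p}=n^{2-p}$. The only difference is presentational: you verify the $\ell_1$-Lipschitz bound of the coordinate data directly via the explicit separation estimate $\|a-b\|_p^p\geq R$ (from $p$-subadditivity), whereas the paper packages this same computation into the claim that the vector-valued restriction is $1$-Lipschitz for $d_p$, which it leaves as ``easy to check.''
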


\begin{proof}
Let us define $A=\overline{B}(0,R) \cup (M_p^n \backslash B(0,2R)) \subset M_p^n$ (balls are considered for $d_p$) and $\phi$: $(A,d_p) \to M_p^n$ such that $\phi$ is the identity on $\overline{B}(0,R)$ and is null on $M_p^n \backslash B(0,2R)$. It is easy to check that $\phi$ is $1$-Lipschitz. We now write $\phi=(\phi_1,\cdots,\phi_n)$. Then for every $k$ \linebreak $\phi_k$: $(A,d_p) \to (\R,|\cdot|^p)$ is $1$-Lipschitz. Thus $\phi_k$: $(A,\|\cdot\|_p) \to (\R,|\cdot|)$ is also $1$-Lipschitz (with the obvious extension of the notion of Lipschitz maps). Now the right hand side of the inequality in Lemma \ref{lemma2} implies that $\phi_k$: $(A,\|\cdot\|_1) \to (\R,|\cdot|)$ is  $n^{\frac{1-p}{p}}$-Lipschitz. So we can extend each $\phi_k$ without increasing the Lipschitz constant and we denote $\varphi_k$ those corresponding extensions. Summarizing we have $\varphi_k$: $(\R^n,\|\cdot\|_1) \to (\R,|\cdot|)$ which is $n^{\frac{1-p}{p}}$ Lipschitz and $\varphi_{k|A}=\phi_k$. Now the left hand side of the inequality in Lemma \ref{lemma2} implies that $\varphi_k$: $(\ell_p^n,\|\cdot\|_p) \to (\R,|\cdot|)$ is  $n^{\frac{1-p}{p}}$-Lipschitz. So $\varphi_k$: $M_p^n \to (\R,|\cdot|^p)$ is  $n^{1-p}$ Lipschitz. It follows easily that $\varphi=(\varphi_1,\cdots,\varphi_n)$: $M_p^n \to M_p^n$ is $n^{2-p}$-Lipschitz and verifies the desired properties.

\end{proof}
We are now able to prove the following result. 
\begin{prop} \label{prop2}
Let $p \in (0,1)$ and $n\in \N$. We recall our notation $(M_p^n,d_p)=(\R^n,\|\cdot\|_p^p)$. Then, the space $\F(M_p^n)$ is isometric to $S_0(M_p^n)^*$. In particular $S_0(M_p^n)$ is $1$-norming.
\end{prop}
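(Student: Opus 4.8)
The plan is to verify the hypotheses of Dalet's predual theorem (Theorem \ref{propdalet}) for the space $M_p^n$, since the desired conclusion $\F(M_p^n) \equiv S_0(M_p^n)^*$ together with the $1$-norming property is exactly what that theorem delivers, provided $M_p^n$ is proper and $S_0(M_p^n)$ separates points uniformly. The properness of $M_p^n$ is immediate: since $\R^n$ with any norm is locally compact and $d_p = \|\cdot\|_p^p$ is topologically equivalent to the usual metric, every closed ball $\overline{B}(0,R)$ is a closed bounded subset of $\R^n$, hence compact. So the entire burden reduces to showing that $S_0(M_p^n)$ separates points uniformly with some uniform constant.

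To establish uniform separation, I would fix $x \neq y \in M_p^n$ and, because $\omega(t) = t$ restricted near $0$ behaves like the nontrivial gauge induced by raising the $p$-norm to the power $p$, try to build a function $f \in S_0(M_p^n)$ with $\|f\|_L \leq c$ and $f(x) - f(y) = d_p(x,y)$ for a constant $c$ depending only on $n$ and $p$ (not on $x,y$). The natural candidate is a coordinate-type or distance-type function modified to lie in $S_0$. The key subtlety is twofold: the function must be \emph{little}-Lipschitz (its local Lipschitz ratio must vanish as $\varepsilon \to 0$) and it must satisfy the decay condition at infinity built into the definition of $S_0(M)$, namely that the Lipschitz ratio over pairs leaving large balls tends to $0$. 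This is precisely where Lemma \ref{lemphi} enters: the cutoff function $\varphi$, which is the identity on $\overline{B}(0,R)$, vanishes outside $B(0,2R)$, and is globally $n^{2-p}$-Lipschitz, lets me truncate a candidate separating function so that it has bounded support while controlling the Lipschitz constant uniformly in $R$.

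Concretely, I would first produce, for the pair $x,y$, a simple function that achieves $f(x)-f(y)=d_p(x,y)$ with good (uniformly bounded) Lipschitz constant on a bounded region containing $x$ and $y$; the fact that $d_p = \|\cdot\|_p^p$ comes from a nontrivial gauge composed with a quasi-norm suggests that such functions automatically have vanishing small-scale Lipschitz ratio, placing them in $lip_0$. Indeed the estimate $|s^p - t^p| \leq |s-t|^p$ for $s,t \geq 0$ (subadditivity of $t \mapsto t^p$) shows the displacement in $d_p$ is controlled by the $p$-th power of the displacement, which forces the little-Lipschitz condition on the small scale. Then I would compose with or multiply by the cutoff $\varphi$ from Lemma \ref{lemphi} (choosing $R$ large enough to contain $x$ and $y$) to force the function to vanish outside a ball, which automatically yields the $S_0$ decay-at-infinity condition, while the Lipschitz constant grows only by the uniform factor $n^{2-p}$. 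Tracking these constants gives a separation constant $c = c(n,p)$ independent of $x,y$.

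The main obstacle I anticipate is the bookkeeping of the Lipschitz constants through the compositions and through the passage between the norms $\|\cdot\|_p$, $\|\cdot\|_1$, and the metric $d_p = \|\cdot\|_p^p$. The crux is that $d_p$ is \emph{not} a norm-induced metric but a power of one, so the triangle inequality is replaced by $p$-subadditivity, and one must repeatedly move between the Lipschitz constant with respect to $d_p$ and the ordinary Lipschitz constant with respect to $\|\cdot\|_p$ via the exponent $p$ (exactly as done in the proof of Lemma \ref{lemphi}). Verifying that the constructed function genuinely lies in $S_0(M_p^n)$ — meaning both the local condition defining $lip_0$ and the global decay condition hold simultaneously after truncation — is the delicate part; once that is confirmed with a uniform constant, Theorem \ref{propdalet} immediately gives $\F(M_p^n) \equiv S_0(M_p^n)^*$, and the $1$-norming statement follows because for proper metric spaces the predual being $S_0(M)$ forces it to be $1$-norming.
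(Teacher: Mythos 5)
Your overall skeleton is the same as the paper's: show that $S_0(M_p^n)$ separates points uniformly with some constant $C_n$, then invoke Dalet's theorem (Theorem \ref{propdalet}) for the proper space $M_p^n$ to get $\F(M_p^n)\equiv S_0(M_p^n)^*$ and, a posteriori, $1$-norming. You also correctly identify the role of the cutoff $\varphi$ from Lemma \ref{lemphi} (it is exactly what yields the flatness-at-infinity condition of $S_0$). However, there is a genuine gap at the crucial step: you assert that the natural separating candidates ``automatically have vanishing small-scale Lipschitz ratio'' because $|s^p-t^p|\leq |s-t|^p$, and this is false. That inequality proves the \emph{global} $d_p$-Lipschitz bound for functions of the form $z\mapsto |F(z)-F(y)|^p$ or $z\mapsto d_p(z,y)$, but it says nothing about the \emph{little}-Lipschitz condition, and in fact these functions are not in $lip_0(M_p^n)$. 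Concretely, take $g(z)=d_p(z,y)=\|z-y\|_p^p$ and $v$ with $\|v\|_p=1$: then for all small $t>0$,
\begin{equation*}
\frac{|g(y+tv)-g(y)|}{d_p(y+tv,y)}=\frac{t^p}{t^p}=1,
\end{equation*}
so the local Lipschitz ratio does not tend to $0$ near $z=y$; the same computation with $|F(\cdot)-F(y)|^p$ gives a ratio bounded below by $|F(v)|^p/\|v\|_p^p>0$ near the level set $F(z)=F(y)$. The heuristic ``$d_p$ comes from a nontrivial gauge, so candidates are automatically flat at small scales'' is precisely backwards: the candidate functions scale \emph{exactly like} the metric at the points where they must do the separating work, which is the worst case for membership in $lip_0$.

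The missing idea — and the heart of the paper's proof — is Kalton's truncated-gauge device. The paper takes a Hahn--Banach functional $x^*\in\ell_\infty^n$ (norming $x-y$ in $\ell_1^n$, then rescaled via Lemma \ref{lemma2} to get $F$ with $|F(x)-F(y)|\geq\|x-y\|_p$), and instead of composing with $t\mapsto t^p$ it composes with $\omega_m(t)=\min\{t^p,mt\}$, setting $f_m(z)=\omega_m(|F(\varphi(z))-F(y)|)-\omega_m(|F(y)|)$. The bound $\omega_m(t)\leq t^p$ gives the uniform $d_p$-Lipschitz estimate $\|f_m\|_L\leq Cn^{1-p}$, while the bound $\omega_m(t)\leq mt$ gives $|f_m(z)-f_m(z')|\leq \bigl(C^{1/p} m n^{(1-p)/p}\|z-z'\|_p^{1-p}\bigr)d_p(z,z')$, whose bracketed factor vanishes at small scales — this, and not the inequality you cite, is what places $f_m$ in $lip_0(M_p^n)$. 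The price is that $f_m$ only separates $x$ and $y$ approximately, but since $\omega_m(t)\uparrow t^p$, one gets $|f_m(x)-f_m(y)|\geq\omega_m(\|x-y\|_p)\to d_p(x,y)$ as $m\to\infty$, which still yields uniform separation with constant $Cn^{1-p}$. Without this (or an equivalent regularization near the zero set), your construction does not produce elements of $lip_0(M_p^n)$, and the argument collapses at the step you yourself flagged as delicate.
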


\begin{proof}
In order to prove this result, we will first prove that $S_0(M_p^n)$ is $C_n$-norming for some $C_n>0$. And then we will deduce the desired result using Theorem \ref{propdalet}.

For every $m \in \N$ and $t\geq 0$, we define the following function $\omega_m(t)=\min \{ t^p , mt \}$ which is continuous, non-decreasing and subadditive. Note that $\lim\limits_{m \to + \infty}\omega_m(t) =t^p$.

Let $x\neq y \in M_p^n$. Since $(\ell_1^n)^* \equiv \ell_{\infty}^n $, by Hahn-Banach theorem there exists $x^* \in \ell_{\infty}^n$ such that $\|x^*\|_{\infty} = 1$ and $\la x^*,x-y \ra = \|x-y\|_1$. According to Lemma \ref{lemma2} this gives $\la x^*,x-y \ra \geq n^{\frac{p-1}{p}} \|x-y\|_p$. From now on we denote $F:=n^{\frac{1-p}{p}}x^*$ and we see $F$ as an element of $(l_p^n)^* \equiv l_{\infty}^n$ of norm $\|F\|_{(l_p^n)^*} \leq n^{\frac{1-p}{p}} $ which satisfies
\begin{equation}
|F(x)-F(y)|\geq \|x-y\|_p \label{IF}
\end{equation} 
Let us consider $R>2\max(\|x\|_p^p,\|y\|_p^p)$ and $\varphi: M_p^n \to M_p^n$ given by Lemma \ref{lemphi} (we denote $C$ its Lipschitz constant). Of course we can see $\varphi$ as a $C^{\frac{1}{p}}$-Lipschitz function from $\ell_p^n$ to $\ell_p^n$. We then consider $f_m$ defined on $M_p^n$ by

$$f_m(z)=\omega_m(|F(\varphi(z))-F(y)|)-\omega_m(|F(y)|).$$ 

Let us prove that those functions $f_m$ belong to $S_0(M_p^n)$ and do the job. For $z \neq z' \in M_p^n$ we compute
\begin{eqnarray*}
|f_m(z)-f_m(z')| &=& | \, \omega_m(|F(\varphi(z))-F(y)|) - \omega_m(|F(\varphi(z'))-F(y)|)\, |  \\
&\leq& \omega_m (|F(\varphi(z))- F(\varphi(z'))|)  \\
&=& \omega_m (|F(\varphi(z)-\varphi(z'))|). 
\end{eqnarray*}
By its definition $\omega_m(t)\leq t^p$. So we have 
$$|f_m(z)-f_m(z')| \leq |F(\varphi(z)-\varphi(z'))|^p \leq n^{1-p}d_p(\varphi(z),\varphi(z')) \leq C  n^{1-p} \, d_p(z,z') .$$
Thus, $f_m$ is $d_p$-Lipschitz with $\|f_m\|_{L} \leq C n^{1-p}$. Now since $\omega_m(t)\leq mt$ we get 
\begin{eqnarray*}
|f_m(z)-f_m(z')| &\leq&  m|F(\varphi(z)-\varphi(z'))| \leq  m n^{\frac{1-p}{p}} \|\varphi(z)-\varphi(z')\|_p \leq C^{\frac{1}{p}} m n^{\frac{1-p}{p}} \|z-z'\|_p \\
&\leq & (C^{\frac{1}{p}} m n^{\frac{1-p}{p}} \|z-z'\|_p^{1-p}) \, d_p(z,z').
\end{eqnarray*}
Since $1-p>0$, $\|z-z'\|_p^{1-p}$ and thus the Lipschitz constant of $f_m$ can be as small as we want for small distances. This provides the fact that $f_m \in lip_0(M_p^n)$. It remains to prove that $f_m$ satisfies the flatness condition at infinity to get $f_m \in S_0(M_p^n)$. To this end, fix $\ep >0$ and pick $k>2$ such that $\displaystyle{\frac{2C n^{1-p}}{(k-2)} \leq \ep}$. Now let $z$ and $z'$ be in $M$, and let us discuss by cases:
\begin{enumerate}[(i)]
\item If $z \not \in \overline{B}(0,kR)$ and $z' \not \in \overline{B}(0,2R)$, then $|f_m(z)-f_m(z')|=0 < \ep$.
\item If $z \not \in \overline{B}(0,kR)$ and $z' \in \overline{B}(0,2R)$, then
\begin{eqnarray*}
\frac{|f_m(z) - f_m(z')|}{d_{p}(z,z')} &\leq& \frac{|F(\varphi(z'))|^p}{(k-2)R} \\
&\leq& \frac{n^{1-p} \|\varphi(z')\|_p^p}{(k-2)R}   \\
&\leq & \frac{C n^{1-p} (2R)}{(k-2)R} \leq \ep.
\end{eqnarray*}
\end{enumerate}

Since $\ep$ is arbitrary, this proves that $f_m\in S_0(M)$. To finish the first part of the proof just notice now that using the inequality (\ref{IF}) and the fact that $\lim\limits_{m \to + \infty}\omega_m(t) =t^p$, we get $$|f_m(x)-f_m(y)| = \omega_m(|F(x)-F(y)|) \geq   \omega_m (\|x-y\|_p) \underset{m \to + \infty}{\longrightarrow} d_{p}(x,y).$$ Thus $S_0(M_p^n)$ is $Cn^{1-p}$-norming.

We are now moving to the duality argument. Remark that $M_p^n$ is a proper metric space, so using Theorem \ref{propdalet}, we have that $S_0(M_p^n)^*\equiv \F(M_p^n)$. Thus, obviously $S_0(M_p^n)$ is $1$-norming.
\end{proof}

Of course this last result still holds for every metric space originating from a $p$-Banach space $X_p$ of finite dimension.

\begin{cor} \label{cormp}
Let $p \in (0,1)$, $n\in \N$. Consider $(M_p,d_p)=(X,\|\cdot\|^p)$ where $(X,\|\cdot\|)$ is a $p$-Banach space of finite dimension. Then, the space $\F(M_p)$ is isometric to $S_0(M_p)^*$. In particular $S_0(M_p)$ is $1$-norming.
\end{cor}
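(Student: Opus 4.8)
The plan is to reduce Corollary \ref{cormp} to Proposition \ref{prop2} by exploiting the fact that, in finite dimension, every $p$-norm is equivalent to the standard $\ell_p^n$ $p$-norm. Concretely, I would fix a linear isomorphism identifying $X$ with $\R^n$ and first record that any two $p$-norms on $\R^n$ are equivalent: from $p$-subadditivity one gets the reverse-triangle estimate $|\,\|x\|^p - \|y\|^p\,| \leq \|x-y\|^p$, so $x \mapsto \|x\|$ is continuous for the usual topology, and comparing $\|\cdot\|$ with $\|\cdot\|_p$ on the (compact) $\ell_p^n$-unit sphere yields constants $0 < a \leq b$ with $a\|x\|_p \leq \|x\| \leq b\|x\|_p$ for all $x \in \R^n$. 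Raising to the power $p$, the identity map $\mathrm{Id}\colon M_p \to M_p^n$ is then a bi-Lipschitz bijection between the two metric spaces.

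Next I would transfer the relevant property across this bi-Lipschitz equivalence. Recall that $S_0(M_p^n)$ separating points uniformly is equivalent to it being $C$-norming for some $C \geq 1$ (as recalled after Definition \ref{deflip}, via Proposition 3.4 in \cite{kalton2004}), and Proposition \ref{prop2} guarantees this for $M_p^n$. The composition operator $f \mapsto f \circ \mathrm{Id}$ is a linear isomorphism $Lip_0(M_p^n) \to Lip_0(M_p)$ whose norm and the norm of its inverse are controlled by the bi-Lipschitz constants; moreover it maps $S_0(M_p^n)$ onto $S_0(M_p)$, because the two defining flatness conditions (at small distances and outside large balls) are asymptotic conditions preserved, up to multiplicative constants, under a bi-Lipschitz change of metric. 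Consequently $S_0(M_p)$ is $C'$-norming for some $C'$, i.e., it separates points uniformly.

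Finally, $M_p = (X, \|\cdot\|^p)$ is a proper metric space, since $X$ is finite-dimensional and its closed $d_p$-balls, being closed bounded subsets of $\R^n$, are compact. Dalet's theorem (Theorem \ref{propdalet}) then applies and yields $S_0(M_p)^* \equiv \F(M_p)$. Being an isometric predual, $S_0(M_p)$ norms $\F(M_p)$ exactly, which is precisely the assertion that $S_0(M_p)$ is $1$-norming.

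The step I expect to require the most care is the verification that the composition operator sends $S_0(M_p^n)$ onto $S_0(M_p)$, that is, that the ``little-Lipschitz at the origin'' and ``flat at infinity'' conditions are genuine bi-Lipschitz invariants. One could instead bypass this entirely by repeating the construction in the proof of Proposition \ref{prop2} verbatim, replacing the role of Lemma \ref{lemma2} by the equivalence constants $a,b$ above; the technical functions $f_m$ then land in $S_0(M_p)$ by the same case analysis, at the cost of more computation. The reduction via bi-Lipschitz invariance is shorter, while the direct reproof is the safer fallback.
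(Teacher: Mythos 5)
Your proposal is correct and follows essentially the same route as the paper: a bi-Lipschitz identification of $M_p$ with $M_p^n$, transfer of the norming property of $S_0(M_p^n)$ (from Proposition \ref{prop2}) by composing with the bi-Lipschitz map, and then properness plus Dalet's theorem (Theorem \ref{propdalet}) to upgrade $C$-norming to the isometric predual statement and $1$-norming. The step you flag as delicate---that composition with a bi-Lipschitz map sends $S_0(M_p^n)$ into $S_0(M_p)$---is exactly the step the paper also uses, and asserts without further detail.
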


\begin{proof}
Note that since $X_p$ is of finite dimension, it is isomorphic to $\ell_p^n$ for some $n\in \N$. Thus there is a bi-Lipschitz map between $M_p$ and $M_p^n$, let us say $L$: $M_p \to M_p^n$ is bi-Lipschitz with $C_1 d_{M_p}(x,y) \leq d_{M_p^n}(L(x),L(y)) \leq C_2 d_{M_p}(x,y)$. Now $S_0(M_p)$ is $\frac{C_2}{C_1}$-norming. Indeed pick $x \neq y \in M_p$ and $\ep >0$. Since $S_0(M_p^n)$ is $1$-norming there exists $f \in S_0(M_p^n)$ with Lipschitz constant less than $1+\ep$ such that $$|f(L(x))-f(L(y))| = d_{M_p^n}(L(x),L(y)) \geq C_1 d_{M_p}(x,y).$$ Now $f \circ L$: $M_p \to \R$ is Lipschitz with Lipschitz constant less than $C_2(1+\ep)$. Moreover as the composition of a bi-Lipschitz map with an element of $S_0(M_p^n)$ we know that $f \circ L \in S_0(M_p)$. Thus $S_0(M_p)$ is $\frac{C_2}{C_1}$-norming. Since $M_p$ is proper, it follows again from Theorem \ref{propdalet} that $\F(M_p)$ is isometric to $S_0(M_p)^*$ and thus $S_0(M_p)$ is $1$-norming.
\end{proof}

As announced, the assumptions of Theorem \ref{propo} are satisfied for a metric space originating from a $p$-Banach space of finite dimension.
\begin{cor} \label{cor37}
Let $p \in (0,1)$. Consider $(M_p,d_p)=(X,\|\cdot\|^p)$ where $(X,\|\cdot\|)$ is a $p$-Banach space of finite dimension. Then, there exists a sequence $(E_n)_n$ of finite-dimensional subspaces of $\F(M_p)$ such that $\F(M_p)$ is $(1+\ep)$-isomorphic to a subspace of $\left( \sum\disp{\oplus_n E_n } \right)_{\ell_1}$.
\end{cor}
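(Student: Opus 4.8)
The plan is to deduce Corollary \ref{cor37} from Theorem \ref{propo} applied \emph{directly} to $M_p$, so that the whole task reduces to verifying the three hypotheses of that theorem for $M_p=(X,\|\cdot\|^p)$. Two of them are immediate from what precedes. First, since $X$ is finite dimensional its closed balls are norm-compact, and as $t\mapsto t^p$ is a homeomorphism of $[0,\infty)$ the metric $d_p$ induces the usual topology; hence $M_p$ is proper. Second, Corollary \ref{cormp} already gives that $S_0(M_p)$ is $1$-norming, i.e.\ separates points uniformly, and that $\F(M_p)\equiv S_0(M_p)^*$. It is essential here that I apply Theorem \ref{propo} to $M_p$ itself rather than to the model $M_p^n$: the conclusion demands the constant $(1+\ep)$ for every $\ep>0$, and transporting an $\ell_1$-embedding through a bi-Lipschitz equivalence would pollute that constant. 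So the only genuine content left is to show that $\F(M_p)$ has (MAP).

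For this I would not build norm-one approximating operators by hand; instead I would exploit the duality $\F(M_p)\equiv S_0(M_p)^*$ together with part (G2) of Theorem \ref{grothendieck}. Since $M_p$ is separable, $\F(M_p)$ is a separable dual space, so by (G2) it suffices to prove that $\F(M_p)$ has the much weaker approximation property (AP). As (AP) is an isomorphic invariant, and a bi-Lipschitz bijection of pointed metric spaces induces a linear isomorphism of the associated free spaces, I may replace $M_p$ by the model space $M_p^n=(\R^n,\|\cdot\|_p^p)$, to which it is bi-Lipschitz equivalent exactly as in the proof of Corollary \ref{cormp}, and prove (AP) for $\F(M_p^n)$.

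To obtain (AP) for $\F(M_p^n)$ I would argue that $M_p^n$ is a \emph{doubling} metric space: its distance $d_p(x,y)=\sum_{i=1}^n|x_i-y_i|^p$ is the $\ell_1$-combination of $n$ copies of the snowflaked real line $(\R,|\cdot|^p)$, and the doubling property is stable both under snowflaking (for $p\in(0,1)$) and under finite $\ell_1$-products. Since the Lipschitz-free space over a doubling metric space has the bounded approximation property (by a result of Lancien and Perneck\'a), $\F(M_p^n)$ has (BAP), hence (AP). Transporting this back through the bi-Lipschitz equivalence yields (AP) for $\F(M_p)$, and then (G2) upgrades it to (MAP). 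With all three hypotheses verified, Theorem \ref{propo} applied to $M_p$ produces, for every $\ep>0$, the desired sequence $(E_n)_n$ of finite-dimensional subspaces and the $(1+\ep)$-embedding of $\F(M_p)$ into $\left(\sum\disp{\oplus_n E_n}\right)_{\ell_1}$.

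The main obstacle is precisely the (MAP) step, and it is worth stressing why it is delicate: the paper itself records as an open question whether $\F(M,\omega\circ d)$ has (MAP) for a general proper $M$ and nontrivial gauge $\omega$, so no soft or general reason is available. The crucial feature in the present situation is finite dimensionality, which is what makes $M_p^n$ doubling; the snowflake exponent $p$ is harmless for doubling but would wreck any naive attempt to run the standard grid-and-interpolation constructions for $\F(\R^n)$ verbatim, since the natural retractions are Lipschitz with respect to the Euclidean metric rather than $d_p$ (compare the factor $n^{2-p}$ appearing in Lemma \ref{lemphi}). Routing through doubling and then the duality-plus-Grothendieck upgrade sidesteps these constant issues and is, I expect, the cleanest path; a fully self-contained construction iterating the cut-off maps of Lemma \ref{lemphi} over finite nets of each ball is conceivable, but it would demand careful control of the approximation constants.
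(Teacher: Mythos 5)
Your proof is correct and follows essentially the same route as the paper: verify the hypotheses of Theorem \ref{propo} for $M_p$ itself, using Corollary \ref{cormp} for the uniform separation of points, the Lancien--Perneck\'a result that free spaces over doubling metric spaces have (BAP), and Grothendieck's theorem together with the duality $\F(M_p)\equiv S_0(M_p)^*$ to upgrade (BAP) to (MAP). The only cosmetic difference is that you establish doubling by transferring to the model space $M_p^n$ via the bi-Lipschitz equivalence, whereas the paper simply asserts directly that $M_p$ is doubling.
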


\begin{proof}
The aim is to show that all assumptions of Theorem \ref{propo} are satisfied for $\F(M_p)$. According to Corollary \ref{cormp}, $S_0(M_p)$ is $1$-norming. Now it is proved in \cite{lancienpernecka} (Corollary 2.2) that if $M$ is a doubling metric space (that is there exists $D(M)\geq 1$ such that any ball $B(x,R)$ can be covered by $D(M)$ open balls of radius $R/2$) then $\F(M)$ has the BAP. Since $M_p\subset \R^n$, we get that $M_p$ is doubling. Thus in our case $\F(M_p)$ has the BAP. Since it is a dual space, we get from Theorem \ref{grothendieck} that $\F(M_p)$ actually has the (MAP). Thus all the assumptions of Theorem \ref{propo} are satisfied.
\end{proof}

We now turn to the study of the structure of $\F(M_p)$ with more general assumptions on $M_p$. In particular we now pass to infinite dimensional spaces and the aim is to explore the behavior of $\F(M_p)$ regarding properties such as the (MAP) and the Schur property. To do so, we will assume that $X$ is a $p$-Banach space which admits a Finite Dimensional Decomposition (shortened in FDD). In particular a space which admits a Schauder basis such as $\ell_p$ satisfies this assumption. We start with the study of the Schur property. Using our Proposition \ref{lemma1} we manage to prove the following result.

\begin{th1} \label{corlp}
Let $p$ in $(0,1)$ and let $(X,\|\cdot\|)$ be a $p$-Banach space which admits an FDD. Then $\F(X,\|\cdot\|^p)$ has the Schur property.
\end{th1}

\begin{proof}
First of all, note that we can assume that $X$ admits a monotone FDD. Indeed, it is classical that we can define an equivalent $p$-norm $\vertiii{\cdot}$ on $X$ such that the finite dimensional decomposition is monotone for $(X,\vertiii{\cdot})$ (see Theorem 1.8 in \cite{kaltonpeck} for instance). Now from the fact that $(X,\vertiii{\cdot})$ and $(X,\| \cdot \|)$ are isomorphic we deduce that $(X,\vertiii{\cdot}^p)$ and $(X,\| \cdot \|^p)$ are Lipschitz equivalent. Thus, by a routine argument for Lipschitz-free spaces, this implies that $\F(X,\vertiii{\cdot}^p)$ and 
$\F(X,\| \cdot \|^p)$ are isomorphic. Since the Schur property is stable under isomorphism, $\F(X,\vertiii{\cdot}^p)$ has the Schur property if and only if $\F(X,\| \cdot \|^p)$ has the Schur property. So from now on we assume that the FDD is monotone.

The aim is to apply Proposition \ref{lemma1}. We denote again $(M_p,d_p)=(X,\|\cdot\|^p)$. Since $X$ admits a monotone FDD, there exists a sequence $(X_k)_{k\in \N}$ of finite dimensional subspaces of $X$ such that every $x\in X$ admits a unique representation of the form $x=\sum_{k=1}^{\infty} x_k$ with $x_k \in X_k$. If we denote $P_n$ the projections from $X$ to $X_n$ defined by $P_n(x)=\sum_{k=1}^n x_k$ then $\sup_n \|P_n\|=1$. Notice that those projections are actually $1$-Lipschitz from $M_p$ to $M_{p,n}$ where $M_{p,n} = (X_n,d_p)$. 

Fix $x \neq y \in M_p$ and $\ep>0$. We can write $x=\sum_{k=1}^{\infty} x_k$, $y=\sum_{k=1}^{\infty} y_k$ with $x_k,y_k \in X_k$ for every $k$. Now fix $N \in \N$ such that $d_p(x,P_{N}(x))< \ep$ and $d_p(y,P_{N}(y))< \ep$. Since each $X_k$ is of finite dimension, the space $(\sum_{k=1}^N X_k,\|\cdot\|)$ is of finite dimension and thus by Corollary \ref{cormp}, $S_0(A)$ is $1$ norming where $A=(\sum_{k=1}^N M_{p,k},d_p)$. So in particular $lip_0(A)$ is $1$-norming. Thus, according to Proposition \ref{lemma1}, $lip_0(M_p)$ is $1$-norming and so $\F(M_p)$ has the Schur property by Proposition \ref{proposchur}.
\end{proof}

\begin{rem}
Notice that our Theorem \ref{corlp} is not a special case of Theorem 4.6 in \cite{kalton2004}. Indeed, in general we cannot write the distance $\|\cdot\|^p$ originating from a $p$-norm as the composition of a gauge and another distance. Let us prove this for instance in $M_p^2=(\R^2,\|\cdot\|_p^p)$. We argue by contradiction and so we assume that there exists $\omega$ a nontrivial gauge and $d$ a distance on $\R^2$ such that $\|x-y\|_p^p = \omega(d(x,y))$ for every $x$ and $y$ in $\R^2$. Now we consider the points $x=(t_x,0)$, $y=(0,t_y)$. Straightforward computations shows that $\|x-y\|_p^p = |t_x|^p + |t_y|^p = \|x\|_p^p + \|y\|_p^p$. Since $d$ is a distance and $\omega$ is a gauge we have 
\begin{eqnarray*}
|t_x|^p + |t_y|^p &=& \omega (d(x,y)) \leq \omega (d(x,0)+d(y,0)) \\
&\leq& \omega(d(x,0)) + \omega(d(y,0)) = |t_x|^p + |t_y|^p.
\end{eqnarray*}
Thus $\omega (d(x,0)+d(y,0)) = \omega(d(x,0)) + \omega(d(y,0))$. From where we deduce that $\omega$ is additive, and so is such that $\omega(t) = t\omega(1)=t$. This contradicts the fact that $\omega$ is a nontrivial gauge.

\end{rem}

We finish here by proving our last result about the (MAP). We keep the same notation as in Theorem \ref{corlp}.

\begin{prop}
Let $p \in (0,1)$ and $X$ be a $p$-Banach space which admits an FDD with decomposition constant $K$. Then $\F(X,\|\cdot\|^p)$ has the $K$-(BAP). In particular, if $X$ admits a monotne FDD then $\F(X,\|\cdot\|^p)$ has the (MAP).
\end{prop}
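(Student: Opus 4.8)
The plan is to manufacture the approximating finite-rank operators out of the canonical FDD projections, post-composed with the metric approximation property of the free spaces over the finite-dimensional blocks. I keep the given $p$-norm (no renorming, since the constant $K$ must be tracked). Write $(X_k)_k$ for the FDD, $P_n(x)=\sum_{k=1}^n x_k$ for the canonical projections and $K=\sup_n\|P_n\|$ for the decomposition constant. The first observation is that each $P_n$, viewed as a self-map of $M_p=(X,\|\cdot\|^p)$, is $K^p$-Lipschitz: indeed $d_p(P_nx,P_ny)=\|P_n(x-y)\|^p\le K^p\|x-y\|^p=K^p d_p(x,y)$. By the factorization property of the introduction it induces a linear operator $\widehat{P_n}\colon\F(M_p)\to\F(M_p)$ with $\|\widehat{P_n}\|\le K^p$; and since the range of $P_n$ lies in the block $M_p^{(n)}:=(\sum_{k\le n}X_k,d_p)$, this operator factors as $\widehat{P_n}=\iota\circ Q_n$, where $Q_n\colon\F(M_p)\to\F(M_p^{(n)})$ is induced by $P_n$ (so $\|Q_n\|\le K^p$) and $\iota\colon\F(M_p^{(n)})\to\F(M_p)$ is induced by the inclusion (so $\|\iota\|\le1$).

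Next I would check that $\widehat{P_n}\to\mathrm{Id}$ in the strong operator topology. On a finitely supported element $\mu=\sum_i a_i\,\delta_{M_p}(z_i)$ one has $\|\mu-\widehat{P_n}\mu\|\le\sum_i|a_i|\,d_p(z_i,P_nz_i)=\sum_i|a_i|\,\|z_i-P_nz_i\|^p$, which tends to $0$ because $P_nz_i\to z_i$ in $X$ by the definition of an FDD. Since finitely supported elements are dense in $\F(M_p)$ and $\sup_n\|\widehat{P_n}\|\le K^p<\infty$, a routine density-plus-uniform-boundedness argument upgrades this to $\widehat{P_n}\gamma\to\gamma$ for every $\gamma\in\F(M_p)$. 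Strong convergence of a uniformly bounded sequence of operators then forces uniform convergence on every compact subset of $\F(M_p)$.

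The last ingredient is that each $\F(M_p^{(n)})$ has the (MAP): the block $\sum_{k\le n}X_k$ is a finite-dimensional $p$-Banach space, so $\F(M_p^{(n)})$ is a dual space by Corollary \ref{cormp}, and $M_p^{(n)}$ is doubling, whence $\F(M_p^{(n)})$ has the (BAP) and therefore the (MAP) via Theorem \ref{grothendieck} — exactly as in the proof of Corollary \ref{cor37}. Now, given $\varepsilon>0$ and a compact set $L\subset\F(M_p)$, I would first fix $n$ with $\sup_{\gamma\in L}\|\widehat{P_n}\gamma-\gamma\|\le\varepsilon/2$; then, applying the (MAP) of $\F(M_p^{(n)})$ to the compact set $Q_n(L)$, I choose a finite-rank operator $R$ on $\F(M_p^{(n)})$ with $\|R\|\le1$ and $\sup_{\mu\in Q_n(L)}\|R\mu-\mu\|\le\varepsilon/2$. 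Set $T=\iota\circ R\circ Q_n$. Then $T$ has finite rank, $\|T\|\le\|\iota\|\,\|R\|\,\|Q_n\|\le K^p$, and for $\gamma\in L$ one estimates $\|T\gamma-\gamma\|\le\|\iota(RQ_n\gamma-Q_n\gamma)\|+\|\widehat{P_n}\gamma-\gamma\|\le\varepsilon/2+\varepsilon/2=\varepsilon$. Since $K\ge1$ and $p<1$ give $K^p\le K$, this yields the $K$-(BAP) (in fact the slightly stronger $K^p$-(BAP)); for a monotone FDD $K=1$, so we obtain the (MAP).

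The main obstacle I anticipate is structural rather than computational: the block projections $\widehat{P_n}$ are the natural candidates for approximating the identity, yet they are never of finite rank, because each $\F(M_p^{(n)})$ is infinite-dimensional. The crux is therefore to splice together two distinct approximation schemes — the strong convergence $\widehat{P_n}\to\mathrm{Id}$ coming from the FDD and the finite-rank approximation inside each block free space coming from its (MAP) — without letting the operator norm degrade. Factoring $\widehat{P_n}$ as $\iota\circ Q_n$ with $\|\iota\|\le1$ is precisely what collapses the norm bound to $\|Q_n\|\le K^p$, so that composing with a norm-one $R$ introduces no extra constant and the target bound $K$ is attained.
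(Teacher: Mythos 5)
Your proof is correct and follows essentially the same route as the paper: linearize the FDD projections into operators landing in the free spaces over the finite-dimensional blocks, invoke the (MAP) of those block free spaces exactly as in Corollary \ref{cor37}, and compose with a finite-rank operator there. The differences are only cosmetic --- you verify the compact-set form of the (BAP) via strong convergence of $\widehat{P_n}$ where the paper approximates finitely many vectors, and you track the slightly sharper constant $K^p$ in place of $K$ --- but the decomposition and the key ingredients coincide with the paper's proof.
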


\begin{proof}
We still denote $(M_p,d_p)=(X,\|\cdot\|^p)$. Let $\mu_1,...,\mu_n \in \F(M_p)$ and $\ep>0$. Then there exists $N \in \N$ and $\nu_1,...,\nu_n \in \F(A)$ (where $A=(\sum_{k=1}^N M_{p,k},d_p)$) such that $\|\mu_k - \nu_k \| \leq \frac{\ep}{K}$. We have seen in the proof of Corollary \ref{cor37} that $\F(A)$ has (MAP). Thus, there exists $T$: $\F(A) \to \F(A)$ a finite rank operator such that $\|T\| \leq 1$ and $\|T\nu_k - \nu_k\| \leq \ep$ for every $k$. Since $P_N$: $M_p \to A$ is a $K$-Lipschitz retraction, the linearization $\hat{P_N}$: $\F(M_p) \to \F(A)$ is projection of norm at most $K$. This leads us to consider the operator $\hat{P_n}\circ T$: $\F(M_p) \to \F(M_p)$ for which direct computations show that it does the work. Indeed $\hat{P_n}\circ T$ is of finite rank, $\|\hat{P_n}\circ T\| \leq K$ and for every \nolinebreak$k$:
\begin{eqnarray*}
\|\hat{P_n}\circ T \mu_k - \mu_k\| &\leq& \|\hat{P_n}\circ T \mu_k - \hat{P_n}\circ T \nu_k\| + \|\hat{P_n}\circ T \nu_k - \nu_k\| + \|\mu_k - \nu_k\| \\
&\leq & \|\hat{P_n}\circ T\|  \|\mu_k - \nu_k\| + \|T \nu_k - \nu_k\| +   \ep\\
&\leq & 3 \ep .
\end{eqnarray*}
\end{proof}

\section{Final comments and open questions} \label{section4}

In Proposition \ref{proposchur} we stated that if $lip_0(M)$ is $1$-norming then the space $\F(M)$ has the Schur property. It is then natural to try to relax the assumption. For instance, we ask the following question
\begin{qu} \label{qu1}
Let $M$ be metric space such that $lip_0(M)$ is $C$-norming for some $C>1$ ? Does $\F(M)$ has the Schur property ?
\end{qu}

Now remark that we can deduce Proposition $\ref{propo2}$ as a direct consequence of Theorem $\ref{propo}$ under the additional assumption that $\F(M)$ has MAP. However, it seems that most of the examples of proper (in particular compact) metric spaces that we can find in the literature are as follows. Either $S_0(M)$ separates points uniformly and $\F(M)$ has (MAP), or $S_0(M)$ does not separate points uniformly and $\F(M)$ does not have (MAP). So we wonder:
\begin{qu} \label{qu2}
Let $M$ be proper metric space such that $S_0(M)$ separates points uniformly. Then does $\F(M)$ have (MAP) ? In particular, if $M$ is proper and if $\omega$ is a nontrivial gauge, does $\F(M,\omega \circ d)$ have (MAP) ?
\end{qu}
In Section \ref{section4}, we proved (Proposition \ref{corlp}) that Lipschitz-free spaces, over some metric spaces originating from $p$-Banach spaces, have the Schur property. It is then natural to wonder if we can extend this result to a larger class of metric spaces. Surprisingly, it is really easy to see that we cannot extend this result to every metric space originating from a $p$-Banach space. Indeed, consider the metric space $M$ originating from $L_p[0,1]$. Then the map $\varphi \colon t \in [0,1] \mapsto \mathbbm{1}_{[0,t]} \in M$ is a nonlinear isometry. Therefore there is a linear and isometric embedding of $\F([0,1])=L_1([0,1])$ into $\F(M)$. Consequently, since the Schur property is stable under passing to subspaces and $L_1$ does not have it, $\F(M)$ does not have the Schur property. Furthermore, using the same ideas one can show that $\F([0,1]^2)$ linearly embeds into $\F(M)$. Thus, using the fact that $\F(\R^2)$ does not embed into $L_1$ (see \cite{naorsche}), we get that $\F(M)$ does not embed into $L_1$. A major difference between $L_p$ and the $p$-Banach spaces studied in Section \ref{section3} is that $L_p$ has trivial dual. In particular, the dual does not separate points of $L_p$. This suggests the following question.
\begin{qu} \label{qu3}
Consider $(X,\|\cdot\|)$ a $p$-Banach space whose dual $X^*$ separates points. Then does $\F(X,\|\cdot\|^p)$ have the Schur property ?
\end{qu}

\textbf{Acknowledgments.} The author is grateful to G. Lancien and A. Proch\'azka for useful conversations. Finally, the author is grateful to the referee for useful comments and suggestions which permitted to improve the presentation of this paper.

\bibliographystyle{elsarticle}
\bibliography{mabiblio}

\end{document}